\definecolor{Red}{rgb}{0.7,0,0.1}
\definecolor{Green}{rgb}{0,0.7,0}
\def\url@leostyle{%
 \@ifundefined{selectfont}{\def\UrlFont{\sf}}{\def\UrlFont{\scriptsize\ttfamily}}} \makeatother\urlstyle{leo}
\newtheorem{theorem}{Theorem}
\newtheorem{lemma}[theorem]{Lemma}
\theoremstyle{definition}
\newtheorem{definition}[theorem]{Definition}
\theoremstyle{remark}
\newtheorem{remark}[theorem]{Remark}
\numberwithin{equation}{section}
\numberwithin{theorem}{section}
\def\cC{\mathcal{C}}
\def\cD{\mathcal{D}}
\def\cM{\mathcal{M}}
\def\bC{\mathbb{C}}
\def\bR{\mathbb{R}}
\newcommand{\1}{\mathbbm{1}}                     % preferable way of writing indicator function
\title{A Regularity Criterion for Solutions to the 3D NSE\\ in `Dynamically Restricted' Local Morrey Spaces}
\author{
Zoran Gruji\'c \\
%\small  Department of Mathematics \\[-0.6ex]
%\small  University of Virginia \\[-0.6ex]
%\small  141 Cabell Dr \\[-0.6ex]
%\small  Charlottesville, VA 22903  \\[-0.6ex]
%\small  \url{zg7c@virginia.edu}
\and %\mbox{}\\  %% use either \mbox{}\\ or \and to have one col or two cols with the author
 Liaosha Xu\\
%\small  Department of Mathematics \\[-0.6ex]
%\small  University of Virginia \\[-0.6ex]
%\small  141 Cabell Dr \\[-0.6ex]
%\small  Charlottesville, VA 22903  \\[-0.6ex]
%\small  \url{lx5af@virginia.edu}
}
\begin{document}

\maketitle

\begin{abstract}

It is shown that a local-in-time strong solution $u$ to the 3D Navier-Stokes equations remains regular on an interval $(0,T)$ provided a smallness $\epsilon_0$-condition on $u$ in a lower time-restricted local Morrey space is stipulated; more precisely,
\[
\sup_{t\in(0,T)}\ \sup_{x\in\bR^3,\ \eta(t)\le r\le1}\ \frac{1}{r^\alpha}\int_{B_r(x)}|u(y,t)|^pdy \le \epsilon_0
\]
where $\eta$ is a dynamic dissipation scale consistent with the turbulence phenomenology and $\alpha$ and $p$ are suitable parameters. Such regularity criterion guarantees the volumetric sparseness of local spatial structure of intense vorticity components, preventing the formation of the finite-time blow up at $T$ under the framework of $Z_\alpha$-sparseness classes introduced in \citet{Bradshaw2019}.

%{\noindent \small
%{\it \bf Keywords:} }
\end{abstract}

\section{Introduction}

Consider the 3D incompressible, visous fluid modeled by the 3D Navier-Stokes equations (NSE)
\begin{align*}
u_t-\Delta u+ u\cdot\nabla u+\nabla p=0\ ,\qquad \textrm{div}\ u=0\ ,\qquad u(x,0)=u_0\ .
\end{align*}
For simplicity, set the viscosity $\nu$ to be 1 and the external force $f$ to be 0. Henceforth, the spatial domain will be the whole space $\bR^3$.

Morrey spaces, both global and local variants (or--in other words--nonrestricted and restricted versions), 
have been utilized in the mathematical study of the 3D NSE as a source of the initial configurations yielding local-in-time well-posedness for large initial data and global-in-time well-posedness for small initial data (e.g., \citet{Taylor1992} and \citet{Giga1989}), as well as the mathematical framework suitable for formulating interior regularity conditions (e.g., \citet{Chen2001}). The local Morrey spaces are of particular interest as they describe uniformly-local behavior of local $L^p$-quantities with respect to the (physical) scale. Local conditions of the Caffarelli-Kohn-Nirenberg (CKN)-type ($\limsup_{r\to0}$ on the parabolic cylinder-type) are closely related (\citet{Caffarelli1982}).

In 1938, \citet{Morrey1938} introduced the following norms,
\begin{align*}
\|f\|_{M_{p,\lambda}}:=\sup_{B_r(x)} r^{-\lambda/p} \|f\|_{L^p(B_r(x))}.
\end{align*}
In particular, denote by $\cM_q$ the class of locally integrable functions on $\bR^3$ satisfying
\begin{align*}
\underset{x\in\bR^3,\ 0<r<\infty}{\sup}\ r^{3(1/q-1)}\int_{B_r(x)} |f(y)|dy<\infty\ ;
\end{align*}
one can also formulate an analogous condition in terms of the locally finite measures (e.g., \citet{Giga1989} obtained well-posedness results for the vorticity formulation of the 3D NSE in the class of locally finite measures with the exponent $q=\frac{3}{2}$).

The local Morrey-type spaces $LM_{p,\theta,w}$ were introduced by \citet{Guliev1998} and the norm is given by
\begin{align*}
\|f\|_{LM_{p,\theta,w}}:=\|w(r)\|f\|_{L^p(B_r(0))}\|_{L^\theta(0,\infty)}\ 
\end{align*}
where $w$ is a positive measurable function defined on $(0,\infty)$. Later, Guliyev considered the complementary local Morrey-type space and the norm is given by
\begin{align*}
\|f\|_{{}^{\cC}\!LM_{p,\theta,w}}:=\|w(r)\|f\|_{L^p(\bR^d\setminus B_r(0))}\|_{L^\theta(0,\infty)}\ .
\end{align*}
The global Morrey-type space $GM_{p,\theta,w}$ was introduced in \citet{Burenkov2004} with
\begin{align*}
\|f\|_{GM_{p,\theta,w}}:=\sup_{x\in\bR^d}\|w(r)\|f\|_{L^p(B_r(x))}\|_{L^\theta(0,\infty)}\ .
\end{align*}
Note that $GM_{p,\infty,r^{-\lambda}}=M_{p,\lambda}$. The precise definitions of these generalized Morrey-type spaces will be given in Section~\ref{sec:MorreyPreDual}.
%A more general Morrey class $\cM_q^p$ is defined by
%\begin{align*}
%\underset{x\in\bR^3,\ 0<R<\infty}{\sup}\ R^{3(p/q-1)}\int_{B(x,R)} |f(y)|^pdy<\infty
%\end{align*}

Classical local Morrey spaces correspond to the weight function $w(r)=r^{-\alpha}$ and the range of scales $r$ in $(0,1]$,
\begin{align*}
\underset{x\in\bR^3,\ 0<r\le1}{\sup}\ r^{-\alpha}\int_{B_r(x)} |f(y)|^pdy<\infty;
\end{align*}
in particular, this leads to a local, scaling-invariant 3D NSE regularity class 
$$L^\infty\left((0,T); GM_{2,\infty,r^{-\frac{1}{2}}}\right),$$
i.e., a requirement that
\begin{align}\label{eq:MorrRegCond}
\sup_{t\in(0,T)}\ \sup_{x\in\bR^3,\ 0<r\le1}\ \frac{1}{r}\int_{B_r(x)}|u(y,t)|^2dy \le \epsilon_0
\end{align}
for a suitably small positive constant $\epsilon_0$. This condition is also of interest from the physical point of view as it rigorously quantifies the uniformly-local $r$-behavior of the local kinetic energy of the fluid consistent with the scaling.

A natural question motivated by the physics of turbulence is whether it is possible to restrict the range of scales in the local Morrey-type conditions to an interval of the form $[\eta(t),1]$ for some explicit, positive function $\eta$ representing a manifestation of the dissipation scale. A recent rigorous work by \citet{Bradshaw2017} presented a conceptually analogous result in the setting of the Besov spaces in which the physical scales are replaced by the Littlewood-Paley frequencies, supplementing the physical motivation. The goal of this project is to show that it is indeed possible to replace/weaken the classical local Morrey conditions with the suitably defined lower time-restricted versions. In particular, the claim is that the Morrey regularity condition \eqref{eq:MorrRegCond} can be replaced with the following dynamically restricted Morrey condition,
\begin{align}\label{eq:RestrMorrCond}
\sup_{t\in(0,T)}\ \sup_{x\in\bR^3,\ \|\nabla u(t)\|_\infty^{-\frac{1}{2}}\lesssim r\le1}\ \frac{1}{r}\int_{B_r(x)}|u(y,t)|^2dy \le \epsilon_0\ .
\end{align}
The idea is to show that the condition \eqref{eq:RestrMorrCond} implies the vorticity $\omega$ eventually (sufficiently close to the blow-up time) gets trapped in the critical sparseness class $Z_{\frac{1}{2}}$ introduced in \citet{Bradshaw2019}, yielding a contradiction. Membership in $Z_{\frac{1}{2}}$ will follow from modifying a criterion for 3D sparseness at scale $r$ stating that if
\begin{align}
\|f\|_{H^{-1}}\le c^*(\lambda,\delta)r^{\frac{5}{2}}\|f\|_\infty
\end{align}
with $\lambda,\delta,c^*$ properly chosen, then each of the six super-level sets
\begin{align}\label{eq:Zsparse}
S_\lambda^{i,\pm}=\left\{x\in\bR^3 :\ f_i^\pm(x)>\lambda\|f\|_\infty\right\},\qquad 1\le i\le3
\end{align}
is 3D $\delta$-sparse at scale $r$ (see \citet{Bradshaw2019}); here, $H^{-1}$ denotes the dual of the Sobolev space $H^1$.
In order to utilize the uniformly-local condition \eqref{eq:RestrMorrCond} within the sparseness realm, a technical adjustment of the global $H^{-1}$-norm will be required. 

Furthermore, following the general idea for establishing the pre-duality results exposed in \citet{Gogatishvili2011}, we obtain the regularity of solutions under more general assumption than \eqref{eq:RestrMorrCond}, namely,
\begin{align*}
\sup_{t\in(0,T)}\ \|u(t)\|_{GM_{p,\theta,w}} \le \epsilon_0
\end{align*}
with $w(r)=r^{-\beta}\1_{\left[\|\omega(t)\|_\infty^{-\delta} ,\ 1\right]}$, discovering that $\delta$, the exponent on the lower restriction, is intrinsically related to the $\alpha$-parameter of the regularity class $Z_\alpha$. 
This paper is also a demonstration of how the $Z_\alpha$ formalism in \citet{Bradshaw2019} can be utilized to obtain novel results in the setting of the more traditional functional frameworks.

The paper is organized as follows. In section~\ref{sec:SparseRegAssm}, we recall the notion of sparseness and the associated geometric-type criteria for smoothness of solutions to the 3D NSE. In section~\ref{sec:MorreyPreDual}, we give the precise definition of the generalized Morrey-type spaces, $GM_{p,\theta,w}$ and exhibit some results about the pre-duality in this type of spaces as well as prove a key technical lemma needed for our main results. Section~\ref{sec:MainThm} is devoted to formulating the main theorems and demonstrating the proofs relying on the results gathered in the previous sections.

\section{Sparseness and Geometric Regularity Criteria}\label{sec:SparseRegAssm}

To be able to give a precise statement of the main theorem, in this section we compile some notions and ideas about sparseness of the regions of intense fluid activity whose mathematical setup was developed in \citet{Grujic2001, Grujic2013} and later has been reformulated and applied for various purposes (see, e.g., \citet{Farhat2017} and \citet{Bradshaw2019}). We also provide a technical lemma in preparation for the proof of the main theorem.

Let $S$ be an open subset of $\bR^3$ and $\mu$ be the Lebesgue measure.
\begin{definition}
For any spatial point $x_0$ and $\delta\in (0,1)$, an open set $S$ is 1D $\delta$-sparse around $x_0$ at scale $r$ if there exists a unit vector $\nu$ such that
\begin{align*}
\frac{\mu\left(S\cap (x_0-r\nu, x_0+r\nu)\right)}{2r} \le \delta\ .
\end{align*}
\end{definition}
The volumetric version is the following.
\begin{definition}
For any spatial point $x_0$ and $\delta\in (0,1)$, an open set $S$ is 3D $\delta$-sparse around $x_0$ at scale $r$ if
\begin{align*}
\frac{\mu\left(S\cap B_r(x_0)\right)}{\mu(B_r(x_0))} \le \delta\ .
\end{align*}
In addition, $S$ is said to be $r$-semi-mixed with ratio $\delta$ if the above inequality holds for every $x_0\in\bR^3$. (It is straightforward to show that for any $S$, 3D $\delta$-sparseness at scale $r$ implies 1D $\delta^{1/3}$-sparseness at scale $r$ around any spatial point $x_0$; however the converse is false.)
\end{definition}

Based on the scale of sparseness of the super-level sets of the positive and negative parts of the vectorial components of a function $f$, \citet{Bradshaw2019} introduced the classes $Z_\alpha$ as a new device for scaling comparison of solutions to the 3D NSE.
\begin{definition}
For $\alpha>0$, $\lambda,\delta\in(0,1)$ and $c_0>1$, we denote by $Z_\alpha(\lambda,\delta,c_0)$ the set of all bounded continuous functions $f:\bR^3\to\bR^3$ satisfying that for any $x_0\in\bR^3$, if $f_j^\pm(x_0)=|f(x_0)|$, the set
\begin{align*}
S_j^\pm:=\left\{x\in\bR^3\ |\ f_j^\pm(x)>\lambda\|f\|_\infty\right\}
\end{align*}
is $\delta$-sparse around $x_0$ at scale $\frac{1}{c}\frac{1}{\|f\|_\infty^\alpha}$ for some $c\in(\frac{1}{c_0},c_0)$.
\end{definition}

Since the sparseness is utilized via the harmonic measure maximum principle for subharmonic functions, the spatial analyticity of solutions is a key player. Here, we recall the result on the spatial analyticity of $u$ and $\omega$, inspired by the method of finding a lower bound on the uniform radius of spatial analyticity of solutions in $L^p$ spaces introduced in \citet{Grujic1998}.

\begin{theorem}[\citet{Guberovic2010} and \citet{Bradshaw2019}]
Let the initial datum $u_0\in L^\infty$ (resp. $\omega_0\in L^\infty\cap L^2$). Then, for any $M>1$, there exists a constant $c(M)$ such that there is a unique mild solution $u$ (resp. $\omega$) in $C_w([0,T], L^\infty)$ where $T\ge\frac{1}{c(M)^2\|u_0\|_\infty^2}$ (resp. $T\ge\frac{1}{c(M)\|\omega_0\|_\infty}$), which has an analytic extension $U(t)$ (resp. $W(t)$) to the region
\begin{align*}
\cD_t:=\left\{x+iy\in\bC^3\ :\ |y|\le \sqrt{t}/c(M)\ \left(\textrm{resp. }|y|\le \sqrt{t}/\sqrt{c(M)}\right)\right\}
\end{align*}
for all $t\in[0,T]$, and
\begin{align*}
\sup_{t\le T}\|U(t)\|_{L^\infty(\cD_t)}\le M\|u_0\|_\infty\ \left(\textrm{resp. }\sup_{t\le T}\|W(t)\|_{L^\infty(\cD_t)}\le M\|\omega_0\|_\infty\right).
\end{align*}
\end{theorem}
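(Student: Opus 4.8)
The plan is to prove both assertions by setting up the \emph{complexified} mild formulation and running a Picard iteration in a scale of spaces of bounded holomorphic functions on the growing domains $\cD_t$. Writing the Duhamel form of the velocity equation as
\begin{align*}
u(t)=e^{t\Delta}u_0-\int_0^t e^{(t-s)\Delta}\,\bP\,\nabla\cdot\big(u\otimes u\big)(s)\,ds\ ,
\end{align*}
I would seek a solution $U(t)$ in the space $X_T$ of functions holomorphic on $\cD_t$ for each $t\le T$, normed by $\norm{U}_{X_T}:=\sup_{t\le T}\norm{U(t)}_{L^\infty(\cD_t)}$, and recover the real solution by restriction to $\bR^3\subset\cD_t$. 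Everything hinges on two estimates — a linear bound for $e^{t\Delta}u_0$ on $\cD_t$ and a bilinear bound for the Duhamel term — after which a contraction argument on the closed ball of radius $M\norm{u_0}_\infty$ delivers existence, uniqueness and the holomorphic extension simultaneously.

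For the linear term, the Gaussian heat kernel $G_\tau(x)=(4\pi\tau)^{-3/2}e^{-\abs{x}^2/4\tau}$ extends holomorphically in its argument, and for $z=\xi+i\eta\in\cD_t$ (so $\abs{\eta}\le\sqrt t/c(M)$) one has $\abs{G_t(z-x)}=(4\pi t)^{-3/2}e^{-(\abs{\xi-x}^2-\abs{\eta}^2)/4t}$, whence $\int_{\bR^3}\abs{G_t(z-x)}\,dx=e^{\abs{\eta}^2/4t}\le e^{1/4c(M)^2}$. Thus $\norm{e^{t\Delta}u_0}_{L^\infty(\cD_t)}\le e^{1/4c(M)^2}\norm{u_0}_\infty$, and taking $c(M)$ large forces this prefactor as close to $1$ as desired — this is exactly the mechanism coupling the width $\sqrt t/c(M)$ to the target bound $M\norm{u_0}_\infty$.

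The bilinear term is the \emph{main obstacle}. Evaluating $e^{(t-s)\Delta}\bP\nabla\cdot(U\otimes U)(s)$ at $z=\xi+i\eta\in\cD_t$ amounts to convolving the kernel $\Gamma_{t-s}$ of $e^{(t-s)\Delta}\bP\nabla$ — which extends holomorphically in its spatial argument and obeys the standard propagator bound $\int_{\bR^3}\abs{\Gamma_\tau(x)}\,dx\lesssim\tau^{-1/2}$ — against $(U\otimes U)(s)$. The difficulty is that for $s$ near $t$ the admissible shift $\abs{\eta}\sim\sqrt t$ dwarfs the dissipative scale $\sqrt{t-s}$ of $\Gamma_{t-s}$, so a naive estimate produces an uncontrolled factor $e^{\abs{\eta}^2/(t-s)}$. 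The remedy is a contour deformation: since $U(s)$ is holomorphic on all of $\cD_s$, I would shift the spatial integration contour from $\bR^3$ to $\bR^3+i\eta'(s)$ with the interpolating choice $\eta'(s)=\tfrac{\sqrt s}{\sqrt t}\,\eta$. This keeps the contour inside $\cD_s$, as $\abs{\eta'(s)}\le\sqrt s/c(M)$, while the residual shift seen by the kernel satisfies $\abs{\eta-\eta'(s)}=\abs{\eta}\tfrac{\sqrt t-\sqrt s}{\sqrt t}\le\abs{\eta}\tfrac{t-s}{t}\le\sqrt{t-s}/c(M)$, so the Gaussian factor stays bounded by an absolute constant. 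The outcome is $\norm{B(U,U)}_{X_T}\lesssim\sqrt T\,\norm{U}_{X_T}^2$, and together with the linear bound the contraction closes on the ball of radius $M\norm{u_0}_\infty$ once $\sqrt T\,M\norm{u_0}_\infty\lesssim 1$, i.e.\ for $T\gtrsim 1/(c(M)^2\norm{u_0}_\infty^2)$, matching the claim.

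Finally, the contraction yields uniqueness of the mild solution; its weak-$*$ continuity into $L^\infty$, hence membership in $C_w([0,T],L^\infty)$, follows from the uniform bound together with strong continuity in a weaker topology, and restriction to $\bR^3$ recovers $u$. For the vorticity statement I would run the identical complexified iteration on the Biot--Savart-coupled equation $\partial_t\omega-\Delta\omega=\nabla\cdot(\omega\otimes u-u\otimes\omega)$ with $u=K*\omega$, where the extra hypothesis $\omega_0\in L^2$ guarantees $u\in L^\infty$ through an interpolation of the Biot--Savart kernel; the different homogeneity of this nonlinearity — one factor carrying a derivative's worth of smoothing — shifts the balance of powers and produces the stated width $\sqrt t/\sqrt{c(M)}$ and existence time $T\gtrsim 1/(c(M)\norm{\omega_0}_\infty)$. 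The one genuinely delicate point throughout remains the contour shift reconciling the $\sqrt t$-scale analyticity width with the $\sqrt{t-s}$-scale of the propagator as $s\to t$.
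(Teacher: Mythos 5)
First, a point of context: the paper does not prove this theorem at all — it is quoted from \citet{Guberovic2010} (velocity case) and \citet{Bradshaw2019} (vorticity case), so your proposal can only be compared with the method of those references. For the velocity half, your plan \emph{is} essentially that method (going back to \citet{Grujic1998}): complexify the mild formulation, run a Picard iteration in the norm $\sup_{t\le T}\|\cdot\|_{L^\infty(\cD_t)}$, and handle the mismatch between the $\sqrt{t}$-width of $\cD_t$ and the $\sqrt{t-s}$-scale of the Oseen kernel by the contour shift $\eta'(s)=(\sqrt{s}/\sqrt{t})\,\eta$. Your key inequality $|\eta-\eta'(s)|\le|\eta|\,(t-s)/t\le\sqrt{t-s}/c(M)$ is correct, and the bookkeeping (prefactor $e^{1/4c(M)^2}$ from the complexified Gaussian, contraction on the ball of radius $M\|u_0\|_\infty$) is the standard one. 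One slip of wording: the contraction closes for $T$ \emph{up to} a threshold of order $1/(c(M)^2\|u_0\|_\infty^2)$, not ``for $T\gtrsim$'' that quantity; the theorem's content is that one may take $T$ equal to that threshold.

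The vorticity half, however, has a genuine gap. You treat $\nabla\cdot(\omega\otimes u-u\otimes\omega)$ as a generic bilinear term, recover $u=K*\omega$ by Biot--Savart, and invoke $\omega_0\in L^2$ only to conclude $u\in L^\infty$. The interpolation this gives is $\|u\|_\infty\lesssim\|\omega\|_{L^2}^{2/3}\|\omega\|_\infty^{1/3}$, so your bilinear estimate reads $\|B(\omega,\omega)\|\lesssim\sqrt{t}\,\|u\|_\infty\|\omega\|_\infty\lesssim\sqrt{t}\,\|\omega\|_{L^2}^{2/3}\|\omega\|_\infty^{4/3}$, and the iteration closes only for $t\lesssim\|\omega_0\|_{L^2}^{-4/3}\|\omega_0\|_\infty^{-2/3}$. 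That time scale degenerates as $\|\omega_0\|_{L^2}\to\infty$ at fixed $\|\omega_0\|_\infty$ (vorticity of unit amplitude spread over a huge region, for which $\|u_0\|_\infty\sim\|\omega_0\|_{L^2}^{2/3}$ is huge), whereas the theorem asserts $T\ge 1/(c(M)\|\omega_0\|_\infty)$ with $c(M)$ depending on $M$ \emph{only}. Within any scheme that bounds the nonlinearity by $\|u\|_\infty\|\omega\|_\infty$, the requirement $\sqrt{t}\,\|u\|_\infty\lesssim 1$ is unavoidable, and $\|u\|_\infty$ is not controlled by any power of $\|\omega\|_\infty$; so the stated time scale is \emph{provably out of reach} of your argument, not a matter of ``different homogeneity shifting the balance of powers.'' Reaching $1/\|\omega_0\|_\infty$ requires exploiting structure that the blind bilinear estimate discards (advection does not amplify sup-norms; the stretching term is governed by $\nabla u$, a singular integral of $\omega$ itself), together with propagating the $L^2$ control of the iterates on the complex domains. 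This is not a cosmetic issue for the present paper: its main argument applies the theorem on windows of length $\sim 1/\|\omega(t)\|_\infty$ at escape times approaching a putative singularity, where $\|\omega(t)\|_{L^2}$ is large, so your shorter existence time would break Theorem~\ref{th:SparsityRegVel} and Theorem~\ref{th:MainThm} as used. The vorticity case must either be reworked along the lines of \citet{Bradshaw2019} or explicitly deferred to that reference.
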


As has been demonstrated in \citet{Farhat2017} and \citet{Bradshaw2019}, the concept of `escape time' provides a more optimal 
setting for applying the harmonic measure principle-based sparseness argument.
\begin{definition}
Let $u$ (resp. $\omega$) be in $C((0,T^*), L^\infty)$ where $T^*$ is the first possible blow-up time. A time $t\in(0,T^*)$ is an escape time if $\|u(s)\|_\infty>\|u(t)\|_\infty$ (resp. $\|\omega(s)\|_\infty>\|\omega(t)\|_\infty$) for any $s\in(t,T^*)$. (Local-in-time continuity of $L^\infty$-norm implies there are continuum-many escape times.)
\end{definition}

Next we recall a theorem in \citet{Farhat2017} and an analogous result (a variation of it) in \citet{Bradshaw2019} to be utilized in the proof of our main result,
\begin{theorem}[\citet{Farhat2017} and \citet{Bradshaw2019}]\label{th:SparsityRegVel}
Let $u$ (resp. $\omega$) be in $C((0,T^*), L^\infty)$ where $T^*$ is the first possible blow-up time, and assume, in addition, that $u_0\in L^\infty$ (resp. $\omega_0\in L^\infty\cap L^2$). Let $t$ be an escape time of $u(t)$ (resp. $\omega(t)$), and suppose that there exists a temporal point
\begin{align*}
&\qquad\quad s=s(t)\in \left[t+\frac{1}{4c(M)^2\|u(t)\|_\infty^2},\ t+\frac{1}{c(M)^2\|u(t)\|_\infty^2}\right]
\\
&\left(\textrm{resp. } s=s(t)\in \left[t+\frac{1}{4c(M)\|\omega(t)\|_\infty},\ t+\frac{1}{c(M)\|\omega(t)\|_\infty}\right]\ \right)
\end{align*}
such that for any spatial point $x_0$, there exists a scale $\rho\le \frac{1}{2c(M)^2\|u(s)\|_\infty}$ $\left(\textrm{resp. }\rho\le \frac{1}{2c(M)\|\omega(s)\|_\infty^{\frac{1}{2}}}\right)$ with the property that the super-level set
\begin{align*}
&\qquad\quad V_\lambda^{j,\pm}=\left\{x\in\bR^3\ |\ u_j^\pm(x,s)>\lambda \|u(s)\|_\infty\right\}
\\
&\left(\textrm{resp. }\Omega_\lambda^{j,\pm}=\left\{x\in\bR^3\ |\ \omega_j^\pm(x,s)>\lambda \|\omega(s)\|_\infty\right\}\ \right)
\end{align*}
is 1D $\delta$-sparse around $x_0$ at scale $\rho$; here the index $(j,\pm)$ is chosen such that $|u(x_0,s)|=u_j^\pm(x_0,s)$ (resp. $|\omega(x_0,s)|=\omega_j^\pm(x_0,s)$), and the pair $(\lambda,\delta)$ is chosen such that the followings hold:
\begin{align*}
\lambda h+(1-h)=2\lambda\ ,\qquad h=\frac{2}{\pi}\arcsin\frac{1-\delta^2}{1+\delta^2}\ , \qquad \frac{1}{1+\lambda}<\delta<1\ .
\end{align*}
(Note that such pair exists and a particular example is that when $\delta=\frac{3}{4}$, $\lambda>\frac{1}{3}$.) Then, there exists $\gamma>0$ such that $u\in L^\infty((0,T^*+\gamma); L^\infty)$, i.e. $T^*$ is not a blow-up time.
\end{theorem}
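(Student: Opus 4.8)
The plan is to argue by contradiction: assuming $T^*<\infty$ is a genuine blow-up time, so that $\|u(t)\|_\infty\to\infty$ as $t\to T^{*-}$ and (by continuity of the $L^\infty$-norm) escape times accumulate at $T^*$, I would show that the sparseness hypothesis forces a quantitative \emph{drop} of the sup-norm, $\|u(s)\|_\infty<\|u(t)\|_\infty$, which is flatly incompatible with the defining inequality $\|u(s)\|_\infty>\|u(t)\|_\infty$ of an escape time. The two engines are the spatial-analyticity theorem quoted above and the harmonic-measure maximum principle for subharmonic functions; the escape-time structure is what couples the analyticity radius at time $s$ to the norm at the earlier time $t$.

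First I would set up the analytic extension. Restarting the mild solution at the escape time $t$ with datum $u(t)$, the analyticity theorem produces a holomorphic extension $U$ on $\cD_{s}$ for every $s$ in the admissible interval, with $\sup_{s}\|U(s)\|_{L^\infty(\cD_s)}\le M\|u(t)\|_\infty$. For the selected $s=s(t)$ one has $s-t\ge \frac{1}{4c(M)^2\|u(t)\|_\infty^2}$, so the uniform radius of analyticity in each complex direction is at least $\sqrt{s-t}/c(M)\ge \frac{1}{2c(M)^2\|u(t)\|_\infty}$. Since $t$ is an escape time, $\|u(t)\|_\infty\le\|u(s)\|_\infty$, whence the prescribed sparseness scale obeys $\rho\le \frac{1}{2c(M)^2\|u(s)\|_\infty}\le \frac{1}{2c(M)^2\|u(t)\|_\infty}$ and therefore sits \emph{inside} the radius of analyticity. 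Consequently, around every spatial point $x_0$ and along the sparseness direction $\nu$, the dominant component $U_j$ extends holomorphically to the complex disc $\{x_0+z\nu:\ |z|<\rho\}$ with $|U_j|\le M\|u(t)\|_\infty$ there.

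Next comes the harmonic-measure step. I would take $x_0$ to realize (up to an arbitrarily small loss) the spatial maximum of $|u(\cdot,s)|$ and let $(j,\pm)$ be the prescribed dominant index, so that $u_j^\pm(x_0,s)=\|u(s)\|_\infty$. Restricting to the complex line through $x_0$ in the direction $\nu$, the function $u_j^\pm$ is majorized by the harmonic function $\mathrm{Re}(\pm U_j)$; the $1$D $\delta$-sparseness of the super-level set $V_\lambda^{j,\pm}$ along the segment means that on a subset of relative length $\ge 1-\delta$ one has the \emph{good} bound $u_j^\pm\le\lambda\|u(s)\|_\infty$, while on the remainder of the disc only the analytic bound $M\|u(t)\|_\infty$ is available. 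Feeding this two-level data into the harmonic-measure (two-constants) principle, with the worst-case symmetric placement of the sparse set computing the harmonic weight to be exactly $h=\frac{2}{\pi}\arcsin\frac{1-\delta^2}{1+\delta^2}$, yields the pointwise estimate
\[
\|u(s)\|_\infty \;=\; u_j^\pm(x_0,s)\;\le\; h\,\lambda\,\|u(s)\|_\infty+(1-h)\,M\,\|u(t)\|_\infty .
\]
Rearranging gives $\|u(s)\|_\infty\le \frac{(1-h)M}{1-h\lambda}\,\|u(t)\|_\infty$.

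Finally I would close the loop. The admissibility constraints on the pair $(\lambda,\delta)$---the relation $\lambda h+(1-h)=2\lambda$ together with $\frac{1}{1+\lambda}<\delta<1$---force $\lambda=\frac{1-h}{2-h}<\frac12$, and in any case $\lambda<1$ guarantees $\frac{1-h\lambda}{1-h}>1$; hence the free parameter $M>1$ in the analyticity theorem may be fixed close enough to $1$ that $\frac{(1-h)M}{1-h\lambda}<1$. This produces $\|u(s)\|_\infty<\|u(t)\|_\infty$, contradicting the escape-time inequality. Therefore $T^*$ cannot be a blow-up time, and the standard local existence/continuation then furnishes $\gamma>0$ with $u\in L^\infty((0,T^*+\gamma);L^\infty)$. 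The vorticity statement is proved verbatim, with $\|\omega(\cdot)\|_\infty^{1/2}$ playing the role of $\|u(\cdot)\|_\infty$ and the $c(M)$-exponents adjusted as in the analyticity theorem. \textbf{The main obstacle} is the harmonic-measure estimate itself: extracting the sharp weight $h$ from the worst-case geometry of the sparse super-level set along the complex disc, and then carrying out the constant bookkeeping (linking $\rho$, the analyticity radius, and $M\to1^+$) delicately enough that the resulting contraction factor is genuinely $<1$.
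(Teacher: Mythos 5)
Your proposal is correct and reconstructs essentially the standard argument of \citet{Farhat2017} and \citet{Bradshaw2019} to which the paper defers (the paper states this theorem as a citation, without reproducing a proof): restart the solution at the escape time $t$, note that the escape-time inequality places the sparseness scale $\rho$ inside the analyticity radius $\sqrt{s-t}/c(M)$, run the two-constants/harmonic-measure estimate along the complexified sparseness direction to get $\|u(s)\|_\infty \le h\lambda\|u(s)\|_\infty + (1-h)M\|u(t)\|_\infty$, and fix $M$ close enough to $1$ (equivalently $2\lambda M<1$) to contradict $\|u(s)\|_\infty>\|u(t)\|_\infty$. One cosmetic slip: on the real segment the correct inequality is $\mathrm{Re}(\pm U_j)\le u_j^\pm$ (with equality at $x_0$), not the reverse as you wrote it, and this is precisely the direction your displayed estimate actually uses, so the argument is unaffected.
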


The following lemma is a vector-valued, uniformly-local $L^2$ version of a scalar-valued, Sobolev space lemma in \citet{Iyer2014}; a vectorial $B^{-1}_{\infty,\infty}$ version and $H^{-1}$ version were given respectively in \citet{Farhat2017} and \citet{Bradshaw2019}. 

\begin{lemma}\label{le:3DSparse}
Let $r\in(0,1]$ and $f$ a bounded and continuously differentiable vector-valued function in $\bR^3$. Then, for any pair $(\lambda, \delta)$, $\lambda\in (0,1)$ and $\delta\in(\frac{1}{1+\lambda},1)$, there exists $c^*(\lambda,\delta)>0$ such that if
\begin{align}\label{eq:ZalphaCond}
\sup_{x\in\bR^3} \|f\|_{L^2(B_r(x))} \le c^*(\lambda,\delta) r^{\frac{5}{2}} \|\nabla\times f\|_\infty
\end{align}
then each of the six super-level sets
\begin{align*}
S_\lambda^{i,\pm}=\left\{x\in\bR^3\ |\ (\nabla\times f)_i^\pm(x)>\lambda \|\nabla\times f\|_\infty\right\}\ , \qquad i=1,2,3
\end{align*}
is $(\kappa r)$-semi-mixed with ratio $\delta$, where $\kappa=\sqrt[3]{\frac{\delta(\lambda+1)+1}{2\delta(\lambda+1)}}$.
\end{lemma}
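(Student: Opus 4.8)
The plan is to argue by contradiction, reducing the asserted sparseness to a lower bound on the uniformly-local $L^2$-norm of $f$ that would violate the hypothesis \eqref{eq:ZalphaCond}. Write $g=\nabla\times f$ and $\Lambda=\|\nabla\times f\|_\infty$, fix an index $i$ and a sign (say $+$; the negative part is identical by symmetry), and suppose $S_\lambda^{i,+}$ fails to be $(\kappa r)$-semi-mixed with ratio $\delta$. Then there is a point $x_0$ with
\[
\mu\big(S_\lambda^{i,+}\cap B_{\kappa r}(x_0)\big) > \delta\,\mu\big(B_{\kappa r}(x_0)\big).
\]
Note first that $\delta>\frac{1}{1+\lambda}$ forces $\delta(\lambda+1)>1$, which in turn makes $\kappa<1$; hence $B_{\kappa r}(x_0)$ sits strictly inside $B_r(x_0)$, leaving an annulus on which to place a cutoff.

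First I would choose a test function $\phi\in C^\infty_c(B_r(x_0))$ with $0\le\phi\le1$, $\phi\equiv1$ on $B_{\kappa r}(x_0)$, and $|\nabla\phi|\lesssim\frac{1}{(1-\kappa)r}$, and pair it against the scalar $g_i$. Splitting $\int g_i\phi\,dx$ over the large super-level piece $E:=S_\lambda^{i,+}\cap B_{\kappa r}(x_0)$, the remainder of $B_{\kappa r}(x_0)$, and the annulus $B_r(x_0)\setminus B_{\kappa r}(x_0)$, and using $g_i>\lambda\Lambda$ on $E$ together with the crude bound $g_i\ge-\Lambda$ elsewhere, I obtain a lower bound of the form
\[
\int g_i\,\phi\,dx > \big[\delta\kappa^3(\lambda+1)-1\big]\,\Lambda\,\mu(B_r(x_0)).
\]
The precise value $\kappa^3=\frac{\delta(\lambda+1)+1}{2\delta(\lambda+1)}$ is engineered so that $\delta\kappa^3(\lambda+1)=\frac{\delta(\lambda+1)+1}{2}$, whereby the bracket collapses to $\frac{\delta(\lambda+1)-1}{2}>0$. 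This is exactly where the hypotheses $\lambda\in(0,1)$ and $\delta\in(\frac{1}{1+\lambda},1)$ are spent.

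Next I would produce a matching upper bound by integrating by parts to move the curl off $f$. Since $(\nabla\times f)_i=\epsilon_{ijk}\partial_j f_k$, the pairing is controlled by $f$ and $\nabla\phi$ alone:
\[
\Big|\int g_i\,\phi\,dx\Big| = \Big|\int \epsilon_{ijk}\,f_k\,\partial_j\phi\,dx\Big| \le \|f\|_{L^2(B_r(x_0))}\,\|\nabla\phi\|_{L^2} \lesssim \|f\|_{L^2(B_r(x_0))}\,r^{1/2},
\]
the gradient factor scaling like $r^{1/2}$ because $\nabla\phi$ has size $(1-\kappa)^{-1}r^{-1}$ on an annulus of volume comparable to $r^3$. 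Combining the two bounds and using $\mu(B_r(x_0))\sim r^3$ yields
\[
\sup_{x\in\bR^3}\|f\|_{L^2(B_r(x))}\ \ge\ \|f\|_{L^2(B_r(x_0))}\ \gtrsim\ c^{**}(\lambda,\delta)\,\Lambda\, r^{5/2},
\]
with an explicit constant $c^{**}(\lambda,\delta)$ built from $\frac{\delta(\lambda+1)-1}{2}$ and the $\kappa$-dependent gradient norm; choosing $c^*(\lambda,\delta)$ strictly below $c^{**}(\lambda,\delta)$ contradicts \eqref{eq:ZalphaCond} and proves the lemma.

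The crux — and the step I expect to require the most care in tracking constants — is the lower-bound bookkeeping: the gain from $g_i>\lambda\Lambda$ on $E$ must strictly outweigh the worst-case cancellation from both the sub-threshold part of $B_{\kappa r}(x_0)$ and the uncontrolled annulus, and it is only the tailored inner radius $\kappa r$ (together with $\delta>\frac{1}{1+\lambda}$) that keeps this difference positive. The integration-by-parts trading of the curl for a derivative on $\phi$ is what converts the $H^{-1}$-type criterion of \citet{Bradshaw2019} into a purely local $L^2$-condition on $f$, and localizing the duality to $B_r(x_0)$ via a compactly supported $\phi$ — rather than testing against a global $H^{-1}$ element — is precisely the technical adjustment of the global $H^{-1}$-norm anticipated in the introduction.
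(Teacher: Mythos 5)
Your proposal is correct and follows essentially the same argument as the paper's proof: the same contradiction setup at a point $x_0$, the same radial cutoff $\phi$ equal to $1$ on $B_{\kappa r}(x_0)$ and supported in $B_r(x_0)$, the same three-region lower bound yielding the factor $(\lambda+1)\delta\kappa^3-1=\frac{\delta(1+\lambda)-1}{2}$, and the same integration by parts plus Cauchy--Schwarz giving the $r^{1/2}\|f\|_{L^2(B_r(x_0))}$ upper bound. The only (immaterial) differences are bookkeeping: you absorb the $(1-\kappa)$-dependence into the constant, whereas the paper tracks it explicitly (with a harmless sign typo in its exponent), and you phrase the paper's $I-J-K$ decomposition as a single pointwise lower bound $g_i\ge-\Lambda$ off the super-level set.
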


\begin{proof}
Assume the opposite, i.e. there is an index $i$ such that either $S_\lambda^{i,+}$ or $S_\lambda^{i,-}$ 
is not $(\kappa r)$-semi-mixed with the ratio $\delta$. Without loss of generality, suppose it is $S_\lambda^{1,+}$. Then there exists a spatial point $x_0$ such that
\begin{align}\label{eq:OppSparse}
\mu(S_\lambda^{1,+}\cap B_{\kappa r}(x_0)) >  \varpi \delta \kappa^3 r^3
\end{align}
where $\varpi$ denotes the volume of the 3D unit ball. Let $\phi$ be a smooth, radially symmetric and radially decreasing function such that
\begin{align*}
\phi =\left\{\begin{array}{ccc} 1  &\textrm{ on } B_{\kappa r}(x_0) \\ 0 &\textrm{ on } \left(B_r(x_0)\right)^c\end{array}\right. \qquad\textrm{and}\qquad |\nabla \phi|\lesssim (1-\kappa)^{-1}r^{-1}
\end{align*}
First observe that
\begin{align}
\left|\int_{\bR^3} (\nabla\times f)_1(y)\phi(y)dy\right| &=\left|\int_{B_r(x_0)} \left(f_3(y) (\nabla\phi)_2(y)- f_2(y)(\nabla\phi)_3(y)\right) dy\right| \notag
\\
&\le \|f\|_{L^2(B_r(x_0))}\ \|\nabla\phi\|_{L^2(B_r(x_0))} \notag
\\
&\lesssim (1-\kappa)^{\frac{1}{2}}r^{\frac{1}{2}}\|f\|_{L^2(B_r(x_0))} \label{eq:L2locHolder}
\end{align}
To develop a contradictive result to \eqref{eq:ZalphaCond}, we write
\begin{align}\label{eq:DecompSparse}
\left|\int_{\bR^3} (\nabla\times f)_1(y)\phi(y)dy\right| &\ge \int_{\bR^3} (\nabla\times f)_1(y)\phi(y)dy \ge I-J-K
\end{align}
where
\begin{align*}
I &= \int_{S_\lambda^{1,+}\cap B_{\kappa r}(x_0)} (\nabla\times f)_1(y)\phi(y)dy
\\
J &= \left|\int_{B_{\kappa r}(x_0)\setminus S_\lambda^{1,+}} (\nabla\times f)_1(y)\phi(y)dy\right|
\\
K &= \left|\int_{B_r(x_0)\setminus B_{\kappa r}(x_0)} (\nabla\times f)_1(y)\phi(y)dy\right|
\end{align*}
With \eqref{eq:OppSparse} we have estimates
\begin{align*}
I &= \int_{S_\lambda^{1,+}\cap B_{\kappa r}(x_0)} (\nabla\times f)_1^+(y) dy
\\
&> \lambda\|\nabla\times f\|_\infty\  \mu\left(S_\lambda^{1,+}\cap B_{\kappa r}(x_0)\right)
\\
&\ge \varpi\lambda\delta\kappa^3r^3 \|\nabla\times f\|_\infty
\end{align*}
and
\begin{align*}
J &\le \|\nabla\times f\|_\infty\ \mu\left(B_{\kappa r}(x_0)\setminus S_\lambda^{1,+}\right)
\\
&\le \|\nabla\times f\|_\infty \left(\mu\left(B_{\kappa r}(x_0)\right) - \mu\left(B_{\kappa r}(x_0)\cap S_\lambda^{1,+}\right) \right)
\\
&\le \varpi(1-\delta)\kappa^3r^3\|\nabla\times f\|_\infty
\end{align*}
and
\begin{align*}
K &\le \|\nabla\times f\|_\infty \left(\mu\left(B_r(x_0)\right) - \mu\left(B_{\kappa r}(x_0)\right) \right) \le \varpi(1-\kappa^3)r^3\|\nabla\times f\|_\infty
\end{align*}
Combining the above estimates for $I,J,K$, \eqref{eq:L2locHolder} and \eqref{eq:DecompSparse}, we deduce
\begin{align}
(1-\kappa)^{\frac{1}{2}}r^{\frac{1}{2}}\|f\|_{L^2(B_r(x_0))} \gtrsim \varpi r^3 \|\nabla\times f\|_\infty\left((\lambda+1)\delta \kappa^3 -1\right)
\end{align}
in other words, for some constant $c$,
\begin{align}
\|f\|_{L^2(B_r(x_0))} > c \varpi(1-\kappa)^{-\frac{1}{2}} \left((\lambda+1)\delta \kappa^3 -1\right) r^{\frac{5}{2}} \|\nabla\times f\|_\infty
\end{align}
Since $\delta>\frac{1}{1+\lambda}$, if we set $(\lambda+1)\delta \kappa^3 = \frac{\delta(1+\lambda)+1}{2}$, then
\begin{align*}
\|f\|_{L^2(B_r(x_0))} > c^*(\lambda, \delta)\ r^{\frac{5}{2}} \|\nabla\times f\|_\infty
\end{align*}
where $c^*(\lambda, \delta)=c \varpi(1-\kappa)^{-\frac{1}{2}} (\delta(1+\lambda)-1)/2$ with $\kappa=\sqrt[3]{\frac{\delta(\lambda+1)+1}{2\delta(\lambda+1)}}$, which produces a contradiction.
\end{proof}

\section{Pre-duality in Morrey-type Spaces}\label{sec:MorreyPreDual}

In this section, we review some known results about duality and pre-duality of Morrey spaces in a more general setting as well as prove a key lemma in order to fit our arguments into the setting of general Morrey-type spaces $GM_{p,\theta,w}$, with an eye on relaxing the requirements on the lower restriction of the scale $r$ by modifying the parameters $p$ and $\theta$.
\begin{definition}[\citet{Burenkov2004}]
Let $0<p,\ \theta\le \infty$ and let $w$ be a non-negative measurable function on $(0,\infty)$. Denote by $LM_{p,\theta,w}(\bR^d)$ the local Morrey-type space, the space of all functions $f\in L_{loc}^p(\bR^d)$ with finite quasi-norms
\begin{align*}
\|f\|_{LM_{p,\theta,w}(\bR^d)}:= \|w(r)\|f\|_{L^p(B_r(0))}\|_{L^\theta(0,\infty)}\ .
\end{align*}
Denote by $^{\cC}\!LM_{p,\theta,w}(\bR^d)$ the complementary local Morrey-type space, the space of all functions $f\in L^p(\bR^d\setminus B_r(0))$ for all $r>0$ with finite quasi-norms
\begin{align*}
\|f\|_{^{\cC}\!LM_{p,\theta,w}(\bR^d)}:=\|w(r)\|f\|_{L^p(\bR^d\setminus B_r(0))}\|_{L^\theta(0,\infty)}\ .
\end{align*}
A global version of such Morrey-type spaces, denoted by $GM_{p,\theta,w}$, is defined as the space of all functions $f\in L_{loc}^p(\bR^d)$ with finite quasi-norms
\begin{align*}
\|f\|_{GM_{p,\theta,w}}:= \sup_{x\in\bR^d} \|f(x+\cdot)\|_{LM_{p,\theta,w}(\bR^d)}\ .
\end{align*}
\end{definition}

Analogously, global versions of the complementary local Morrey-type spaces are defined as follows.
\begin{definition}[\citet{Burenkov2007} and \citet{Gogatishvili2013}]
Let $0<p,\ \theta\le \infty$ and let $w$ be a non-negative measurable function on $(0,\infty)$. Denote by $\Delta(^{\cC}\!LM_{p,\theta,w})$ the space of all functions $f\in L_{loc}^p(\bR^d)$ with finite quasi-norms
\begin{align*}
\|f\|_{\Delta(^{\cC}\!LM_{p,\theta,w})}:= \sup_{x\in\bR^d} \|f(x+\cdot)\|_{^{\cC}\!LM_{p,\theta,w}(\bR^d)}\ .
\end{align*}
Denote by $^{\cC}\!GM_{p,\theta,w}$ the space of all distributions $f=\sum_k f_k$ where $f_k(x_k+\cdot)\in {}^{\cC}\!LM_{p,\theta,w}(\bR^d)$ and $x_k\in\bR^d$ with finite quasi-norms
\begin{align*}
\|f\|_{^{\cC}\!GM_{p,\theta,w}}:=\inf_{f=\sum_k f_k}\sum_k \|f_k(x_k+\cdot)\|_{^{\cC}\!LM_{p,\theta,w}(\bR^d)}\ .
\end{align*}
\end{definition}

For the dual spaces of these local Morrey-type spaces, \citet{Gogatishvili2011} and \citet{Gogatishvili2013} give partial answers as follow.
\begin{theorem}[\citet{Gogatishvili2011}, Theorem 6.1 and Theorem 6.2]\label{th:DualMorLoc}
Denote by $\Omega_\theta$ the set of all non-negative measurable functions $w$ on $(0,\infty)$ such that
\begin{align*}
0<\|w\|_{L^\theta(r,\infty)}<\infty\ , \qquad r>0
\end{align*}
and by $^{\cC}\!\Omega_\theta$ the set of all non-negative measurable functions $w$ on $(0,\infty)$ such that
\begin{align*}
0<\|w\|_{L^\theta(0,r)}<\infty\ , \qquad r>0\ .
\end{align*}
Assume $1\le p,\ \theta<\infty$. If $w\in \Omega_\theta$ and $\|w\|_{L^\theta(0,\infty)}=\infty$, then
\begin{align*}
(LM_{p,\theta,w})^*&={}^{\cC}\!LM_{p',\theta',\tilde{w}} \
\end{align*}
where $\tilde{w}(t)=w^{\theta-1}(t)\left(\displaystyle \int_t^\infty w^\theta(s)ds\right)^{-1}$. If $w\in {}^{\cC}\!\Omega_\theta$ and $\|w\|_{L^\theta(0,\infty)}=\infty$, then
\begin{align*}
\left({}^{\cC}\!LM_{p,\theta,w}\right)^*&=LM_{p',\theta',\bar{w}}\
\end{align*}
where $\bar{w}=w^{\theta-1}(t)\left(\displaystyle \int_0^t w^\theta(s)ds\right)^{-1}$. The duality is defined under the pairing $\langle f,g\rangle =\displaystyle \int_{\bR^d} fg$.
\end{theorem}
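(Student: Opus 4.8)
The two assertions are mirror images of one another: interchanging the roles of $B_r(0)$ and its complement swaps $LM$ with ${}^{\cC}\!LM$ and converts the tail integral $\int_t^\infty w^\theta\,ds$ into the head integral $\int_0^t w^\theta\,ds$, so it suffices to prove the first identity, $(LM_{p,\theta,w})^*={}^{\cC}\!LM_{p',\theta',\tilde w}$, and read off the second by this reflection. The plan is to reduce the two-parameter problem to a one-dimensional weighted inequality on a cone of monotone functions. The bridge is the profile map $f\mapsto F$, $F(r):=\|f\|_{L^p(B_r(0))}$, which is non-decreasing with $F(0)=0$; conversely, up to the freedom of distributing $L^p$-mass on spherical shells, every non-decreasing profile is realized by some $f$. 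Thus $\|f\|_{LM_{p,\theta,w}}=\|wF\|_{L^\theta(0,\infty)}$ identifies the unit ball of $LM_{p,\theta,w}$ with the cone of non-decreasing functions $F$ satisfying $\|wF\|_{L^\theta}\le1$.

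For the embedding ${}^{\cC}\!LM_{p',\theta',\tilde w}\hookrightarrow(LM_{p,\theta,w})^*$, given $g$ with non-increasing profile $G(r):=\|g\|_{L^{p'}(\bR^d\setminus B_r(0))}$, I would decompose $\bR^d$ into annuli $A_k=B_{r_{k+1}}(0)\setminus B_{r_k}(0)$ along a sequence $r_k\uparrow\infty$ and apply H\"older on each annulus:
\[
\left|\int_{\bR^d}fg\right|\le\sum_k\|f\|_{L^p(A_k)}\,\|g\|_{L^{p'}(A_k)}\le\sum_k F(r_{k+1})\,G(r_k).
\]
The right-hand side is then controlled by $\|wF\|_{L^\theta}\,\|\tilde wG\|_{L^{\theta'}}$ via a weighted Hardy inequality. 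The precise form $\tilde w(t)=w^{\theta-1}(t)\big(\int_t^\infty w^\theta\,ds\big)^{-1}$ is exactly the transpose weight that makes this estimate sharp, which becomes transparent after writing the monotone $F$ as a Hardy integral of its non-negative Stieltjes measure $dF$ and applying Fubini,
\[
\int_0^\infty F(r)\,G(r)\,dr=\int_{(0,\infty)}\Big(\int_s^\infty G(r)\,dr\Big)\,dF(s),
\]
so that the pairing is governed by the tail integral $\int_s^\infty$, whose weighted normalization is precisely $\int_t^\infty w^\theta\,ds$.

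For the reverse inclusion I would begin with an arbitrary $L\in(LM_{p,\theta,w})^*$. Restricting $L$ to functions supported in a fixed ball $B_R(0)$, where the $LM$-norm is comparable to a weighted $L^p$-norm, the local duality of $L^p$ yields a function $g\in L^{p'}_{loc}(\bR^d)$ with $L(f)=\int fg$ for all compactly supported $f$; the substance is the global bound $\|g\|_{{}^{\cC}\!LM_{p',\theta',\tilde w}}\lesssim\|L\|$. This is where the one-dimensional cone duality carries the argument: testing $L$ against near-extremal non-decreasing profiles $F$ and invoking the duality principle for cones of monotone functions (a Sawyer-type reduction to the associated Hardy operator) recovers exactly the weight $\tilde w$ together with the $L^{\theta'}$-summation. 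The hypothesis $\|w\|_{L^\theta(0,\infty)}=\infty$ enters essentially here: it forces every finite-norm non-decreasing profile to carry no mass at $r=\infty$, which simultaneously makes $\tilde w$ well defined and finite and annihilates the boundary term at infinity that would otherwise enlarge the dual by a one-dimensional complement. I expect this reverse estimate to be the main obstacle, because $LM_{p,\theta,w}$ is not a rearrangement/lattice-ideal space in the usual sense — its unit ball is a genuine cone of monotone profiles — so the standard $L^p$–$L^{p'}$ duality must be interlocked with the sharp weighted Hardy/Sawyer duality, and one must verify that the extremizing profiles are attainable, or at least approximable, by admissible $f$ inside the space.
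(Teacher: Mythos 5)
First, a point of reference: the paper does not prove this statement at all --- it is imported verbatim from \citet{Gogatishvili2011} (Theorems 6.1 and 6.2), so there is no ``paper proof'' to compare against line by line. The closest thing in the paper is Lemma~\ref{le:GblMorreyPreDual}, whose proof does not establish duality from scratch but instead invokes the multidimensional reverse Hardy inequality of \citet{Gogatishvili2012} (quoted as Theorem~\ref{th:ReverseHardy}). That inequality is precisely the tool that your outline is missing, and its absence is the substantive gap. Your reduction to monotone profiles proves less than you claim: the map $f\mapsto F(r)=\|f\|_{L^p(B_r(0))}$ determines the $LM_{p,\theta,w}$-norm, but it does \emph{not} determine the pairing $\int fg$ --- two functions with identical profiles can pair with a fixed $g$ in wildly different ways, since the pairing sees the angular distribution of $f$ relative to $g$. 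Consequently the ``reverse inclusion'' cannot be run on profiles and cone duality alone: one must construct, for a given $g$, genuine near-extremal functions $f$ (essentially $f\sim|g|^{p'-1}$ suitably cut off on annuli adapted to $w$), verify that H\"older is nearly saturated on each annulus, and then prove the discrete weighted Hardy bound $\sum_k F(r_{k+1})G(r_k)\lesssim\|wF\|_{L^\theta}\|\tilde wG\|_{L^{\theta'}}$ with the specific transpose weight $\tilde w$. You assert both of these (the discrete Hardy estimate and the Sawyer-type cone duality ``recovering exactly the weight $\tilde w$''), but together they \emph{are} the theorem; nothing in the proposal actually derives the formula $\tilde w(t)=w^{\theta-1}(t)\bigl(\int_t^\infty w^\theta\,ds\bigr)^{-1}$, nor carries out the passage from the Lebesgue--Stieltjes functional appearing in the sharp-constant characterization to the clean norm $\|\tilde w\,\|g\|_{L^{p'}(\bR^d\setminus B_t(0))}\|_{L^{\theta'}}$, which is where the hypotheses $w\in\Omega_\theta$ and $\|w\|_{L^\theta(0,\infty)}=\infty$ are actually consumed.

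Two further points. Your structural remark that $LM_{p,\theta,w}$ ``is not a rearrangement/lattice-ideal space'' is incorrect: the norm is monotone in $|f|$, so the space is a solid Banach function lattice (what fails is rearrangement invariance), and the standard K\"othe-dual machinery does apply; what your sketch of the reverse inclusion still owes is an argument that every bounded functional is an \emph{integral} functional (order continuity of the norm for $p,\theta<\infty$), since restriction to compactly supported $f$ plus local $L^p$ duality does not by itself rule out a singular part of $L$. Finally, the opening reduction of the second identity to the first ``by reflection'' is heuristic: there is no measure-preserving involution of $\bR^d$ exchanging balls $B_r(0)$ with their complements, and in the cited reference the two statements are proved by parallel but separate arguments. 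In summary, your roadmap points in the same direction as the actual proof in the literature (one-dimensional reduction, weighted Hardy-type inequalities, the role of $\|w\|_{L^\theta(0,\infty)}=\infty$), but the steps you defer --- notably the reverse Hardy inequality with sharp constant --- constitute essentially all of the mathematical content, so as it stands this is a plausible outline rather than a proof.
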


\begin{theorem}[\citet{Gogatishvili2013}, Theorem 6.1]
Let $1\le p<\infty$ and $0<\lambda<d$. Then
\begin{align*}
\left({}^{\cC}\!GM_{p',1,\frac{d-\lambda}{p}-1}(\bR^d)\right)^*=GM_{p,\infty,\frac{\lambda-d}{p}}(\bR^d)
\end{align*}
where $GM_{p,\infty,\nu}:=GM_{p,\infty,w}$ $\left(\textrm{resp. }{}^{\cC}\!GM_{p,\infty,\nu}:={}^{\cC}\!GM_{p,\infty,w}\right)$ with $w(r)=r^\nu$.
\end{theorem}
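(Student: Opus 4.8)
The plan is to deduce this global duality from the centered (local) duality already recorded in Theorem~\ref{th:DualMorLoc}, following the general globalization scheme of \citet{Gogatishvili2011}. The structural point is that $GM_{p,\infty,\nu}$ is assembled as a supremum over translations of the centered space $LM_{p,\infty,\nu}$, whereas $^{\cC}GM_{p',1,w}$ is assembled atomically, as an infimum over decompositions $g=\sum_k g_k$ with $g_k(x_k+\cdot)\in{}^{\cC}LM_{p',1,w}$. The duality between a ``sup-built'' space and an ``atom-built'' space is, at the abstract level, the standard pairing of $\ell^\infty$-type and $\ell^1$-type constructions, so the only genuinely analytic ingredient is the centered identity of Theorem~\ref{th:DualMorLoc}.

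First I would check that the weights match. Writing $w(r)=r^{\frac{d-\lambda}{p}-1}$ and specializing the second identity of Theorem~\ref{th:DualMorLoc} to $\theta=1$ (so $\theta'=\infty$), the dual weight is
\[
\bar w(t)=\left(\int_0^t s^{\frac{d-\lambda}{p}-1}\,ds\right)^{-1}=\frac{d-\lambda}{p}\,t^{\frac{\lambda-d}{p}},
\]
which is, up to the harmless constant $\frac{d-\lambda}{p}$, exactly the weight $r^{\nu}$ with $\nu=\frac{\lambda-d}{p}$ defining the target space. The hypotheses of Theorem~\ref{th:DualMorLoc} hold: since $0<\lambda<d$ we have $\frac{d-\lambda}{p}>0$, so $w\in{}^{\cC}\Omega_1$ (the integral $\int_0^r w$ converges at the origin) while $\|w\|_{L^1(0,\infty)}=\infty$ (it diverges at infinity, as the exponent exceeds $-1$). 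Hence $\left({}^{\cC}LM_{p',1,w}\right)^*=LM_{p,\infty,r^{\nu}}$ with equivalent norms.

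The easy inclusion $GM_{p,\infty,\nu}\hookrightarrow\left({}^{\cC}GM_{p',1,w}\right)^*$ then follows by testing the functional $F_f(g)=\int_{\bR^d}fg$ against a single atom $g_k$ centered at $x_k$. Translating the integral by $x_k$ and invoking the centered duality bound gives
\[
\left|\int_{\bR^d}f\,g_k\right|\le\|f(x_k+\cdot)\|_{LM_{p,\infty,\nu}}\,\|g_k(x_k+\cdot)\|_{^{\cC}LM_{p',1,w}}\le\|f\|_{GM_{p,\infty,\nu}}\,\|g_k(x_k+\cdot)\|_{^{\cC}LM_{p',1,w}}.
\]
Summing over $k$ and taking the infimum over admissible decompositions of $g$ yields $|F_f(g)|\le\|f\|_{GM_{p,\infty,\nu}}\,\|g\|_{^{\cC}GM_{p',1,w}}$, so $F_f$ is bounded with the correct operator-norm control.

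The converse inclusion is where the real work lies. Given $F\in\left({}^{\cC}GM_{p',1,w}\right)^*$, its restriction to atoms centered at a fixed $x$ is, after translation by $x$, a bounded functional on $^{\cC}LM_{p',1,w}$ of norm at most $\|F\|$ (a single-atom decomposition is admissible), so the centered duality supplies a representing density $h_x\in LM_{p,\infty,\bar w}$; translating back produces $f_x$ with $F(g)=\int f_x g$ for every atom $g$ centered at $x$. The crux is to glue the family $\{f_x\}$ into one global function: any smooth compactly supported $g$ is simultaneously an atom centered at every $x$ (its translated $^{\cC}LM$-norm is finite by the weight integrability), and since the pairing is $\int f\cdot g$, such test functions determine the representing densities uniquely as $L^p_{loc}$ functions; hence the $f_x$ agree on overlaps and glue to a single $f$. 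The global bound $\|f\|_{GM_{p,\infty,\nu}}=\sup_x\|f(x+\cdot)\|_{LM_{p,\infty,\nu}}\simeq\sup_x\|h_x\|\le\|F\|$ then places $f\in GM_{p,\infty,\nu}$, and $F=F_f$ on the dense subspace of finite atomic sums, hence everywhere. I expect the principal obstacle to be precisely this gluing-and-uniqueness step, together with verifying density of finite atomic combinations in $^{\cC}GM_{p',1,w}$: one must confirm that complementary-space atoms, which may concentrate near spatial infinity, still pair absolutely convergently against $f$ and that the local representatives are genuinely consistent across all centers.
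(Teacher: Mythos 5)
A preliminary remark on the comparison itself: the paper never proves this statement---it is imported verbatim from \citet{Gogatishvili2013} as background, and the paper even notes immediately afterwards that it is not directly applicable to its setting (which is why Lemma~\ref{le:GblMorreyPreDual} is instead developed from the reverse Hardy inequality, Theorem~\ref{th:ReverseHardy}). So your proposal can only be judged on its own terms. For $1<p<\infty$ your reduction to the centered duality is essentially sound: the weight calculus is correct, since with $w(r)=r^{\frac{d-\lambda}{p}-1}$ and $\theta=1$ the dual weight is $\bar w(t)=\bigl(\int_0^t s^{\frac{d-\lambda}{p}-1}\,ds\bigr)^{-1}=\frac{d-\lambda}{p}\,t^{\frac{\lambda-d}{p}}$; the hypotheses $w\in{}^{\cC}\Omega_1$ and $\|w\|_{L^1(0,\infty)}=\infty$ hold exactly because $0<\lambda<d$; the easy inclusion by testing single atoms is standard; and you correctly isolate the crux of the converse---the gluing---together with a workable mechanism for it: compactly supported functions lie in every translated ${}^{\cC}\!LM_{p',1,w}$ (integrability of $w$ near the origin), so they force the locally integrable representatives $f_x$ to coincide a.e., and finite atomic sums are dense by the infimum definition of the quasi-norm.

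The genuine gap is the endpoint $p=1$, which the statement as quoted includes. There $p'=\infty$, and Theorem~\ref{th:DualMorLoc} assumes both Lebesgue exponents of the predual space lie in $[1,\infty)$; hence the centered duality $\bigl({}^{\cC}\!LM_{\infty,1,w}\bigr)^*=LM_{1,\infty,\bar w}$, on which both directions of your argument rest, is simply unavailable. This is not a removable technicality: duals of $L^\infty$-based spaces generically contain functionals not representable by locally integrable functions (already $(L^\infty)^*\neq L^1$), so the representation step $F\mapsto h_x$ in your converse inclusion needs genuinely different input at this endpoint, and even the H\"older-type bound in the easy direction can no longer be quoted from Theorem~\ref{th:DualMorLoc} (though it could be proved directly). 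As written, your argument establishes the theorem only for $1<p<\infty$. This is precisely the kind of limitation that the alternative, associate-norm route via the reverse Hardy inequality (Theorem~\ref{th:ReverseHardy}, the route the paper itself follows in Lemma~\ref{le:GblMorreyPreDual}) is designed to avoid, at least for the H\"older/boundedness direction. Two minor further points: the identifications you obtain hold with equivalent rather than equal norms (harmless, and consistent with how the paper uses the result), and in the easy direction you should add a line verifying that $\sum_k\int f g_k$ is independent of the chosen atomic decomposition of $g$---it is, by absolute convergence together with distributional convergence of the partial sums, but the pairing is only well defined once this is checked.
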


The results above shall lead us to some proximity of Lemma~\ref{le:3DSparse} since the key step in contradiction argument was the bound on integration against test functions. However, they are not directly applicable because they require either $\|w\|_{L^\theta(0,\infty)}=\infty$ or $w\equiv r^\nu$ while our goal is to restrict the scale $r$ to an interval of the form $[\eta,1]$. 
In order to obtain the duality (or pre-duality) results in $GM_{p,\infty,w}$ with general $w$, we refer to the following.

\begin{theorem}[\citet{Gogatishvili2012}, Theorem~5.4]\label{th:ReverseHardy}
Assume that $0<p\le 1$, $p<q\le \infty$ and $\ell$ is determined by $\frac{1}{\ell}=\frac{1}{p}-\frac{1}{q}$. Let $f$ and $w$ be weight functions on $\bR^d$ and $(0,\infty)$ respectively. Assume $\|w\|_{L^q(t,\infty)} <\infty$ for all $t>0$ and $w\ne 0$ a.e. on $(0,\infty)$. Then, the inequality
\begin{align*}
\|fg\|_{L^p(\bR^d)} \le c\left\|w(t)\int_{B_t(0)}|g(y)|dy\right\|_{L^q(0,\infty)}
\end{align*}
holds for all measurable function $g$ if and only if
\begin{align*}
C_0:= \left(\int_0^\infty \|f\|^{\ell}_{L^{p'}(\bR^d\setminus B_t(0))}\ \frac{d}{dt} \left(\|w\|^{-\ell}_{L^q(t,\infty)}\right)\right)^{1/\ell}\!+ \frac{\|f\|_{L^{p'}(\bR^d)}}{\|w\|_{L^q(0,\infty)}} <\infty
\end{align*}
The sharpest constant $c\approx C_0$.
\end{theorem}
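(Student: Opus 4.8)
The plan is to treat the stated estimate as a \emph{reverse Hardy inequality} in the non-convex range $0<p\le1$ and to pin down its least constant by the usual two-sided scheme for such inequalities. First I would normalize: it is enough to take $g\ge0$, and I would observe that the right-hand side sees $g$ only through the nondecreasing, absolutely continuous profile $G(t):=\int_{B_t(0)}g\,dy$, so the inequality to be characterized is $\|fg\|_{L^p(\bR^d)}\le c\,\|w(t)G(t)\|_{L^q(0,\infty)}$. The map $g\mapsto G$ is a Hardy-type averaging operator with kernel $\1_{\{|y|<t\}}$, coupling the radial variable $|y|$ on $\bR^d$ with the scale $t\in(0,\infty)$; the region $\bR^d\setminus B_t(0)=\{|y|>t\}$ that appears in $C_0$ is exactly the adjoint localization region for this kernel. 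Because $p\le1$ the conjugate exponent $p'=p/(p-1)$ is nonpositive, so the duality at play is the \emph{reverse} Hölder inequality, and — crucially — the extremal configurations are not concentrated but \emph{spread}: the Lagrange analysis of $\max\{\int_A(fg)^p : \int_A g \text{ fixed}\}$ gives the optimal $g\propto f^{p'}$, which is precisely why the harmonic-type mean $\|f\|_{L^{p'}}$ with $p'<0$ is the quantity that surfaces.

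Next I would carry out a radial reduction to one dimension. Since the right-hand side is radial in the scale $t$ while the left-hand side couples $f$ and $g$ only through the constraint $|y|<t$, the worst case for a fixed radial mass distribution $t\mapsto\int_{\partial B_t}g\,d\sigma$ is attained by spreading $g$ across each outer shell proportionally to the reverse-Hölder extremizer of $f$; this collapses the $\bR^d$ problem onto the half-line, producing a genuine one-dimensional reverse Hardy inequality of the form $\big\|\,\cdot\,\big\|_{L^p(d\mu)}\le c\,\big\|w(t)\int_0^t(\cdot)\big\|_{L^q(0,\infty)}$ in which the measure $\mu$ records the profile $t\mapsto\|f\|_{L^{p'}(\bR^d\setminus B_t(0))}$. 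The characterization of such one-dimensional reverse Hardy inequalities is classical, and its condition is exactly an integral of the $f$-profile against $d\big(\|w\|_{L^q(t,\infty)}^{-\ell}\big)$ together with an endpoint term — i.e. the two summands defining $C_0$ — with least constant comparable to that condition.

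The necessity direction ($c\gtrsim C_0$) I would obtain directly by inserting the spread extremizers: for fixed $\tau>0$, a $g$ supported in $\{|y|>\tau\}$ and proportional there to $f^{p'}$ forces $G$ to switch on at scale $\tau$ and isolates the pairing of $\|f\|_{L^{p'}(\bR^d\setminus B_\tau(0))}$ against $\|w\|_{L^q(\tau,\infty)}$; aggregating over $\tau$ reconstructs the integral term, while the regime $\tau\to0$ yields the boundary summand $\|f\|_{L^{p'}(\bR^d)}/\|w\|_{L^q(0,\infty)}$. The sufficiency direction ($c\lesssim C_0$) is the substantive one, and there I would dualize/discretize the one-dimensional problem: choose a sequence of scales along which $\|w\|_{L^q(t,\infty)}^{-\ell}$ grows geometrically, estimate the contribution of each shell by a single application of reverse Hölder, and sum the pieces with a discrete reverse Hardy inequality whose constant depends only on $p,q$. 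The case $q=\infty$ is treated in parallel, the $L^q(0,\infty)$ integrals being replaced by suprema.

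The main obstacle is precisely the reverse, non-convex regime: with $p'<0$ the functional $\|f\|_{L^{p'}}$ is a harmonic-type mean rather than a norm, so it is governed by where $f$ is \emph{small} and is both unstable under truncation to thin shells and non-monotone under the discretization. Consequently the delicate point is not that $c<\infty\iff C_0<\infty$, but the quantitative equivalence $c\approx C_0$: one must route the reverse Hölder step and the discrete reverse Hardy summation through constants depending only on $p,q$ (never on $f$ or $w$), and simultaneously certify that the spread test functions of the necessity direction saturate exactly those steps.
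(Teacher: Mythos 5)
First, a point of order: the paper contains no proof of this statement. Theorem~\ref{th:ReverseHardy} is imported verbatim from \citet{Gogatishvili2012} and is used purely as a black box in the proof of Lemma~\ref{le:GblMorreyPreDual}, so there is no in-paper argument to compare yours against; your proposal has to stand on its own. Its high-level architecture (radial reduction to a one-dimensional reverse Hardy inequality, spread test functions for necessity, a discretization/summation for sufficiency, $q=\infty$ treated in parallel) is indeed the standard route for characterizations of this type and is consistent with the cited source.

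However, there is a concrete error at the analytic core of your plan, and it propagates through both directions. Maximizing $\int_A (fg)^p\,dx$ subject to $\int_A g\,dx$ fixed, with $0<p<1$, the Lagrange condition $p f^p g^{p-1}=\lambda$ gives $g\propto f^{p/(1-p)}=f^{-p'}$, \emph{not} $g\propto f^{p'}$: since $p'=p/(p-1)<0$, your profile $f^{p'}$ places the mass of $g$ exactly where $f$ is \emph{small}, which makes $\|fg\|_{L^p(A)}$ small rather than extremal (it reproduces the harmonic-type mean, but only as a far-from-sharp lower bound). The actual supremum of $\|fg\|_{L^p(A)}$ over $\int_A g=1$ is the positive-exponent quantity $\bigl(\int_A f^{p/(1-p)}\bigr)^{(1-p)/p}$ --- forward H\"older with exponents $1/p$ and $1/(1-p)$, saturated by $g\propto f^{-p'}$ --- and this, not the harmonic-type mean $\bigl(\int_A f^{p'}\bigr)^{1/p'}$, is what the symbol $\|f\|_{L^{p'}}$ in $C_0$ must denote (the two readings agree only at $p=1$). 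Under your harmonic-mean reading the key estimate is simply false: if $f$ vanishes on half of $A$ and $g$ is supported on the other half where $f\equiv1$, then $\bigl(\int_A f^{p'}\bigr)^{1/p'}=0$ while $\|fg\|_{L^p(A)}>0$; with that reading the theorem itself would fail, and, incidentally, so would the identity $\||f|^p\|^{1/p}_{L^{(1/p)'}}=\|f\|_{L^{p'}}$ on which the paper's application in Lemma~\ref{le:GblMorreyPreDual} relies --- it is valid only with the positive-exponent convention. Concretely, then: your necessity test functions ($g\propto f^{p'}$ on $\bR^d\setminus B_\tau(0)$) yield a small ratio $\|fg\|_{L^p}/\|wG\|_{L^q}$ and hence no lower bound on $c$; and your sufficiency step (``estimate the contribution of each shell by a single application of reverse H\"older'') bounds $\|fg\|_{L^p}$ by a quantity that is not an upper bound for it. The ``main obstacle'' you single out --- instability of the harmonic mean, sensitivity to where $f$ is small --- is an artifact of this misreading rather than a feature of the actual proof: once the exponent is corrected to $p/(1-p)$, the relevant functional is a genuine norm, monotone under truncation of the domain, and your outline becomes repairable.
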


\begin{remark}\label{re:ConvLebStInt}
Following Convention~5.3 in \citet{Gogatishvili2012}, if $\|w\|_{L^q(t,\infty)}=0$ at $t=b$ we set $\displaystyle\frac{d}{dt} \left(\|w\|^{-\ell}_{L^q(t,\infty)}\right)=0$ for all $t>b$.
\end{remark}

\bigskip

Next, following the general idea for proving Corollary~3.2 in \citet{Gogatishvili2011}, we derive the following pre-duality result.

\begin{lemma}\label{le:GblMorreyPreDual}
Let $1\le p<\infty$, $1<\theta\le \infty$. Assume $\|w\|_{L^\theta(r,\infty)} <\infty$ for all $r>0$ and $w\ne 0$ a.e. on $(0,\infty)$. Then
\begin{align}
\sup_{g\in GM_{p,\theta,w}}\! \frac{\displaystyle \int_{\bR^d}|f(x)g(x)|dx}{\|g\|_{GM_{p,\theta,w}}}\lesssim \left(\inf_{x\in\bR^d} \int_0^\infty \|f\|^{\theta'}_{L^{p'}(\bR^d\setminus B_r(x))}\ \frac{d}{dr} \left(\|w\|^{-\theta'}_{L^\theta(r,\infty)}\right)\right)^{1/\theta'}\! + \frac{\|f\|_{L^{p'}(\bR^d)}}{\|w\|_{L^\theta(0,\infty)}}
\end{align}
\end{lemma}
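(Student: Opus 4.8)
The plan is to strip off the global supremum by reducing to a single-center local Morrey estimate, and then to read the constant off from the reverse Hardy inequality of Theorem~\ref{th:ReverseHardy}. First I would fix a center $x\in\bR^d$; translating the inner integral gives $\big\|\,w(r)\,\|g\|_{L^p(B_r(x))}\big\|_{L^\theta(0,\infty)}=\|g(x+\cdot)\|_{LM_{p,\theta,w}}\le\|g\|_{GM_{p,\theta,w}}$ for every $x$, so it suffices to construct, for each $x$, a constant $C(x)$ with
\[
\int_{\bR^d}|f(y)g(y)|\,dy\ \le\ C(x)\,\big\|\,w(r)\,\|g\|_{L^p(B_r(x))}\,\big\|_{L^\theta(0,\infty)}\ \le\ C(x)\,\|g\|_{GM_{p,\theta,w}}.
\]
Because the pairing on the left does not see $x$, taking the infimum over $x$ and then the supremum over $g$ produces the stated bound, the infimum landing only on the center-dependent part of $C(x)$ and leaving the center-free term $\|f\|_{L^{p'}}/\|w\|_{L^\theta(0,\infty)}$ untouched. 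After the substitution $y=x+z$ the balls are centered at the origin and $f$ is replaced by $f(x+\cdot)$, so the remaining task is the single inequality $\int_{\bR^d}|fg|\lesssim C_0\,\|g\|_{LM_{p,\theta,w}}$, with $C_0$ to be identified.

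Next I would invoke Theorem~\ref{th:ReverseHardy} with the identifications $q=\theta$ and $\ell=\theta'$, taking the outer weight to be $w$ and the variable function to be $g$; by the sufficiency direction, $C_0<\infty$ delivers precisely the averaging inequality with sharp constant $c\approx C_0$, which is the $\lesssim$ sought. It then remains to match $C_0$ with the bracket in the statement. Writing $W(r):=\int_r^\infty w^\theta$, one computes $\frac{d}{dr}\big(\|w\|^{-\theta'}_{L^\theta(r,\infty)}\big)=\frac{\theta'}{\theta}\,w^\theta(r)\,W(r)^{-\theta'}\,dr$, using $-\theta'/\theta-1=-\theta'$; hence the first term of $C_0$ equals, up to the harmless factor $(\theta'/\theta)^{1/\theta'}$, the complementary-Morrey quantity $\|f\|_{{}^{\cC}\!LM_{p',\theta',\tilde w}}$ with $\tilde w=w^{\theta-1}W^{-1}$ (recall $(\theta-1)\theta'=\theta$), while the endpoint term $\|f\|_{L^{p'}}/\|w\|_{L^\theta(0,\infty)}$ is produced at the outer edge $r=0$ via the convention of Remark~\ref{re:ConvLebStInt}. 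This identifies $C(x)$ with the center-$x$ instance of the bracket and closes the argument.

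The main obstacle is the discrepancy between the inner $L^p$ norm in $\|g\|_{LM_{p,\theta,w}}$ and the inner $L^1$ average $\int_{B_t}|g|$ appearing in the form of Theorem~\ref{th:ReverseHardy} recorded above: a literal application with the $L^1$ average forces the generic pairing exponent to be $1$, and so yields $\|f\|_{L^\infty(\bR^d\setminus B_r)}$ in $C_0$ rather than the desired $\|f\|_{L^{p'}(\bR^d\setminus B_r)}$. I would resolve this by running the characterization in its mixed-norm form, with the inner average replaced by $\|g\|_{L^p(B_t)}$; the passage from the $L^1$ to the $L^p$ inner norm is exactly a scale-localized H\"older pairing $L^{p'}(B_r)\times L^p(B_r)$, and it is this H\"older duality on balls that upgrades the exterior $L^\infty$-control of $f$ to the exterior $L^{p'}$-control recorded in the statement. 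Verifying that this upgrade respects the sharp-constant bookkeeping, so that the mixed-norm inequality is still governed by the same $C_0$ with $\|f\|_{L^{p'}}$ in place of $\|f\|_{L^\infty}$, is the technical heart of the proof; the case $p=1$ requires none of this, since there the inner norm already is the $L^1$ average and Theorem~\ref{th:ReverseHardy} applies verbatim.
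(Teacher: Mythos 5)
There is a genuine gap, and it sits exactly where you placed it yourself: the case $p>1$ is never actually proved, and the workaround you sketch would not close it. Your first reduction (fix a center $x$, use $\|g(x+\cdot)\|_{LM_{p,\theta,w}}\le\|g\|_{GM_{p,\theta,w}}$, take the infimum over $x$ at the end) is correct and matches the paper, and your Lebesgue--Stieltjes bookkeeping is fine; but applying Theorem~\ref{th:ReverseHardy} with $q=\theta$ and $\ell=\theta'$ forces the pairing exponent to equal $1$, which, as you note, produces exterior norms $\|f\|_{L^\infty(\bR^d\setminus B_r(x))}$ rather than $\|f\|_{L^{p'}(\bR^d\setminus B_r(x))}$. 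The fix you propose --- passing from the inner $L^1$ average to $\|g\|_{L^p(B_t)}$ by H\"older on balls --- acts only on the right-hand side of the inequality: H\"older gives $\int_{B_t}|g|\le c_d\,t^{d/p'}\|g\|_{L^p(B_t)}$, so it dominates the $L^1$-type local Morrey functional with weight $v$ by the $L^p$-type functional with modified weight $v(t)t^{d/p'}$, while the pairing $\int_{\bR^d}|fg|$ on the left is untouched. Consequently the characterizing constant one inherits from Theorem~\ref{th:ReverseHardy} still involves exterior $L^\infty$-norms of $f$ (now against a distorted weight); at no point does $f$ get paired against $g$ on balls, so there is no mechanism that converts $\|f\|_{L^\infty}$ into $\|f\|_{L^{p'}}$. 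The ``mixed-norm form'' of the characterization that you invoke --- same $C_0$ but with $L^{p'}$ in place of $L^\infty$ --- is precisely the statement to be proved, so the argument is circular at the decisive step; as written it is complete only for $p=1$.

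The idea you are missing --- and the reason Theorem~\ref{th:ReverseHardy} is stated for pairing exponents $0<p\le1$ --- is to raise everything to the power $p$ \emph{before} invoking it. The paper writes $\int_{\bR^d}|fg|\,dx=\big\||f|^p|g|^p\big\|_{L^{1/p}(\bR^d)}^{1/p}$ and $\|g(x+\cdot)\|_{LM_{p,\theta,w}}^p=\big\|w^p(r)\int_{B_r(0)}|g(x+y)|^p\,dy\big\|_{L^{\theta/p}(0,\infty)}$, so that the inner $L^p$-norm over balls becomes a genuine $L^1$ average of $|g|^p$. Theorem~\ref{th:ReverseHardy} then applies verbatim to the pair $(|f|^p,|g|^p)$ with pairing exponent $1/p\le1$ (this is where $p\ge1$ enters), outer exponent $\theta/p>1/p$ (this is where $\theta>1$ enters), $\ell=\theta'/p$, and weight $w^p$. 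The exterior norms then come out as $\||f|^p\|_{L^{(1/p)'}(\bR^d\setminus B_r(0))}=\|f\|^p_{L^{p'}(\bR^d\setminus B_r(x))}$, which is exactly the $L^{p'}$-control in the statement, and unwinding the powers and taking the supremum over $g$ and the infimum over $x$ gives the lemma. In short: the upgrade from $L^\infty$ to $L^{p'}$ is bought by lowering the pairing exponent from $1$ to $1/p$ via this power substitution, not by a localized H\"older argument on the Morrey side.
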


\begin{proof}
By definition,
\begin{align*}
\frac{\displaystyle \int_{\bR^d}|f(x)g(x)|dx}{\|g\|_{GM_{p,\theta,w}}}= \left(\frac{\||f|^p |g|^p\|_{L^{1/p}(\bR^d)}}{\displaystyle\sup_{x\in\bR^d}\left\|w^p(r)\int_{B_r(x)} |g(y)|^pdy\right\|_{L^{\theta/p}(0,\infty)}}\right)^{1/p}\ .
\end{align*}
For any $1\le p<\infty$ and $1<\theta\le \infty$, by Theorem~\ref{th:ReverseHardy}
\begin{align}\label{eq:ThmConst}
\||f(x+\cdot)|^p |g(x+\cdot)|^p\|_{L^{1/p}(\bR^d)}\le C \left\|w^p(r)\int_{B_r(0)} |g(x+y)|^pdy\right\|_{L^{\theta/p}(0,\infty)}
\end{align}
holds for all measurable function $g$ on $\bR^d$ if and only if
\begin{align*}
C^*(x):=&\ \left(\int_0^\infty \||f(x+\cdot)|^p\|^{\theta'/p}_{L^{(1/p)'}(\bR^d\setminus B_r(0))}\ \frac{d}{dr} \left(\|w^p\|^{-\theta'/p}_{L^{\theta/p}(r,\infty)}\right)\right)^{p/\theta'} + \frac{\||f(x+\cdot)|^p\|_{L^{\theta'/p}(\bR^d)}}{\|w^p\|_{L^{\theta/p}(0,\infty)}}
\\
=&\ \left(\int_0^\infty \|f\|^{\theta'}_{L^{p'}(\bR^d\setminus B_r(x))}\ \frac{d}{dr} \left(\|w\|^{-\theta'}_{L^{\theta}(r,\infty)}\right)\right)^{p/\theta'} + \left(\frac{\|f\|_{L^{\theta'}(\bR^d)}}{\|w\|_{L^{\theta}(0,\infty)}}\right)^p <\infty\ .
\end{align*}
The sharpest constant in \eqref{eq:ThmConst} satisfies $C\approx C^*(x)$. Therefore, for all $x\in\bR^d$,
\begin{align*}
\sup_{g\in GM_{p,\theta,w}}\!\frac{\displaystyle \int_{\bR^d}|f(y)g(y)|dy}{\|g\|_{GM_{p,\theta,w}}}\le c_*  \left(\left(\int_0^\infty \|f\|^{\theta'}_{L^{p'}(\bR^d\setminus B_r(x))}\ \frac{d}{dr} \left(\|w\|^{-\theta'}_{L^{\theta}(r,\infty)}\right)\right)^{1/\theta'}\!+ \frac{\|f\|_{L^{\theta'}(\bR^d)}}{\|w\|_{L^{\theta}(0,\infty)}}\right)
\end{align*}
where $c_*$ is a constant which only depends on $p$ and $\theta$.
\end{proof}

The following lemma will serve as the lower restricted local Morrey version of Lemma~\ref{le:3DSparse}.
\begin{lemma}\label{le:3DSparseGblMor}
Let $r\in(0,1]$ and $f$ be a bounded and continuously differentiable vector-valued function in $\bR^3$. Suppose $1\le p<\infty$, $1<\theta\le \infty$ and $w(s)=s^{-\alpha}\1_{[\rho,1]}$ with $\alpha\theta>1$ and $0<\rho\ll 1$. Then, for any pair $(\lambda, \delta)$, $\lambda\in (0,1)$ and $\delta\in(\frac{1}{1+\lambda},1)$, there exists $\varepsilon(\lambda,\delta)>0$ such that if
\begin{align}\label{eq:EpsMorreyCond}
\|f\|_{GM_{p,\theta,w}} \le \left\{\begin{array}{ccc} \varepsilon(\lambda,\delta)\ (r\vee\rho)^{\frac{1-\alpha\theta}{\theta}}\ r^{3+\frac{p'-3}{p'}} \|\nabla\times f\|_\infty\ , & \textrm{if }\ \theta<\infty \\ \varepsilon(\lambda,\delta)\ (r\vee\rho)^{-\alpha}\ r^{3+\frac{p'-3}{p'}} \|\nabla\times f\|_\infty\ , & \textrm{if }\ \theta=\infty \end{array}\right.
\\
\left(\textrm{resp. }\|f\|_{GM_{p,\theta,w}} \le \left\{\begin{array}{ccc} \varepsilon(\lambda,\delta)\ (r\vee\rho)^{\frac{1-\alpha\theta}{\theta}}\ r^{3-\frac{3}{p'}} \|f\|_\infty\ , & \textrm{if }\ \theta<\infty \\ \varepsilon(\lambda,\delta)\ (r\vee\rho)^{-\alpha}\ r^{3-\frac{3}{p'}} \|f\|_\infty\ , & \textrm{if }\ \theta=\infty \end{array}\right.\ \right) \notag
\end{align}
where $r\vee\rho := \max\{r, \rho\}$, then each of the six super-level sets
\begin{align*}
S_\lambda^{i,\pm}=\left\{x\in\bR^3\ |\ (\nabla\times f)_i^\pm(x)>\lambda \|\nabla\times f\|_\infty\right\}\ , \qquad i=1,2,3
\\
\left(\textrm{resp. }\ S_\lambda^{i,\pm}=\left\{x\in\bR^3\ |\ f_i^\pm(x)>\lambda \|f\|_\infty\right\}\ , \qquad i=1,2,3\ \ \right)
\end{align*}
is $r$-semi-mixed with ratio $\delta$.
\end{lemma}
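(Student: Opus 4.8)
The plan is to run the same contradiction scheme as in Lemma~\ref{le:3DSparse}, changing only the single step where the local $L^2$--Hölder bound on the pairing of $\nabla\times f$ against a cutoff is invoked; that step is where the Morrey structure must enter. Assume, to the contrary, that one of the six sets --- say $S_\lambda^{1,+}$ --- fails to be $r$-semi-mixed with ratio $\delta$, so that there is a center $x_0$ with $\mu(S_\lambda^{1,+}\cap B_r(x_0))>\varpi\delta r^3$. I would take a radial cutoff $\phi$ equal to $1$ on $B_r(x_0)$ and supported in $B_{r/\kappa}(x_0)$ with $\kappa=\kappa(\lambda,\delta)\in(0,1)$ as in Lemma~\ref{le:3DSparse} and $|\nabla\phi|\lesssim(1-\kappa)^{-1}r^{-1}$; all $\kappa$-dependent constants are absorbed into the final $\varepsilon(\lambda,\delta)$, which is why the conclusion is stated cleanly at scale $r$. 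Integration by parts gives $\int_{\bR^3}(\nabla\times f)_1\phi=\int_{\bR^3}\big(f_3(\nabla\phi)_2-f_2(\nabla\phi)_3\big)$, so that $\big|\int(\nabla\times f)_1\phi\big|\le\int|f|\,|\nabla\phi|$.

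The lower bound is a verbatim transcription of Lemma~\ref{le:3DSparse}: splitting $\int(\nabla\times f)_1\phi\ge I-J-K$ over $S_\lambda^{1,+}\cap B_r(x_0)$, $B_r(x_0)\setminus S_\lambda^{1,+}$, and the transition annulus and inserting the non-sparseness hypothesis yields $\int(\nabla\times f)_1\phi\gtrsim r^3\|\nabla\times f\|_\infty$ with a positive constant depending only on $\lambda,\delta$. For the upper bound I would apply the pre-duality Lemma~\ref{le:GblMorreyPreDual} with the roles reversed: take the ``$f$'' of that lemma to be (the components of) $\nabla\phi$ and its ``$g$'' to be our $f$, and evaluate the infimum over centers at $x=x_0$, obtaining
\[
\int_{\bR^3}|f|\,|\nabla\phi| \;\lesssim\; \|f\|_{GM_{p,\theta,w}}\left[\Big(\int_0^\infty \|\nabla\phi\|^{\theta'}_{L^{p'}(\bR^3\setminus B_s(x_0))}\,\frac{d}{ds}\big(\|w\|^{-\theta'}_{L^\theta(s,\infty)}\big)\,ds\Big)^{1/\theta'} + \frac{\|\nabla\phi\|_{L^{p'}}}{\|w\|_{L^\theta(0,\infty)}}\right].
\]
Since $\nabla\phi$ is explicit, the bracket can be computed directly.

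It remains to evaluate that bracket for $w(s)=s^{-\alpha}\1_{[\rho,1]}$. Using $\alpha\theta>1$ one finds $\|w\|_{L^\theta(s,\infty)}\sim(s\vee\rho)^{(1-\alpha\theta)/\theta}$ for small arguments, while $\|\nabla\phi\|_{L^{p'}(\bR^3\setminus B_s(x_0))}$ equals $\|\nabla\phi\|_{L^{p'}}$ for $s\lesssim r$ and vanishes once $B_s(x_0)$ engulfs the support of $\nabla\phi$; together these contribute a factor $(r\vee\rho)^{(\alpha\theta-1)/\theta}$, so that the bracket is $\sim r^{3/p'-1}(r\vee\rho)^{(\alpha\theta-1)/\theta}$ once one substitutes $\|\nabla\phi\|_{L^{p'}}\sim r^{-1}r^{3/p'}=r^{3/p'-1}$. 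Feeding the lower bound and this bracket into $r^3\|\nabla\times f\|_\infty\lesssim\|f\|_{GM_{p,\theta,w}}\cdot(\text{bracket})$ and solving for $\|f\|_{GM_{p,\theta,w}}$ reproduces exactly the claimed threshold $(r\vee\rho)^{(1-\alpha\theta)/\theta}r^{3+(p'-3)/p'}\|\nabla\times f\|_\infty$ (note $3+\tfrac{p'-3}{p'}=4-\tfrac{3}{p'}$); choosing $\varepsilon(\lambda,\delta)$ strictly below the implied constant forces the contradiction. The case $\theta=\infty$ (where $\theta'=1$) is more elementary and needs only $\|f\|_{L^p(B_{r/\kappa}(x_0))}\le(r\vee\rho)^\alpha\|f\|_{GM_{p,\infty,w}}$ straight from the norm together with plain Hölder $\int|f|\,|\nabla\phi|\le\|f\|_{L^p}\|\nabla\phi\|_{L^{p'}}$, producing the factor $(r\vee\rho)^\alpha$ and hence the exponent $-\alpha$. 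The ``resp.'' statements for $f$ itself are obtained identically by pairing against $\phi$ rather than $\nabla\phi$: then $\|\phi\|_{L^{p'}}\sim r^{3/p'}$ replaces $\|\nabla\phi\|_{L^{p'}}\sim r^{3/p'-1}$, lowering the power of $r$ by one to $r^{3-3/p'}$.

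The main obstacle is the explicit evaluation of the dual bracket for the truncated weight $s^{-\alpha}\1_{[\rho,1]}$, which is not literally covered by the hypothesis $w\ne0$ a.e. of Lemma~\ref{le:GblMorreyPreDual} and Theorem~\ref{th:ReverseHardy} but is nevertheless accommodated by the structure of the characterizing constant: $\|w\|_{L^\theta(s,\infty)}$ is constant for $s<\rho$, so $\tfrac{d}{ds}\|w\|^{-\theta'}_{L^\theta(s,\infty)}$ vanishes there, and $\|w\|_{L^\theta(s,\infty)}=0$ for $s>1$, handled by the convention of Remark~\ref{re:ConvLebStInt}; thus the defining integral is effectively supported on $[\rho,1]$. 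One must then separate the two regimes $r\ge\rho$ (where the integral term dominates) and $r<\rho$ (where the $\|w\|_{L^\theta(0,\infty)}$ term dominates) and check that they combine into the single factor $(r\vee\rho)^{(1-\alpha\theta)/\theta}$ with the exact power of $r$ claimed, rather than up to an uncontrolled power of $r$ or $\rho$. Everything else is a direct transcription of the geometric contradiction already carried out for Lemma~\ref{le:3DSparse}.
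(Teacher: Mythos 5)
Your proposal is correct and follows essentially the same route as the paper's own proof: the identical contradiction setup with a cutoff equal to $1$ on the non-sparseness ball $B_r(x_0)$ and supported in a slightly dilated ball, the pre-duality Lemma~\ref{le:GblMorreyPreDual} applied with $\nabla\phi$ (resp.\ $\phi$) in the dual slot and the infimum evaluated at $x_0$, the same evaluation of the Stieltjes bracket (using Remark~\ref{re:ConvLebStInt} for the truncated weight) to $\sim r^{3/p'-1}(r\vee\rho)^{(\alpha\theta-1)/\theta}$, and the same $I-J-K$ lower bound $\gtrsim r^3\|\nabla\times f\|_\infty$ leading to the stated threshold. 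The only minor deviation is your direct H\"older shortcut for $\theta=\infty$ via $\|f\|_{L^p(B_{r/\kappa}(x_0))}\le(r\vee\rho)^\alpha\|f\|_{GM_{p,\infty,w}}$, which is a legitimate simplification of the case the paper itself dismisses as ``similar and easier.''
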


\begin{proof}
Assume the opposite, i.e. there is either $S_\lambda^{i,+}$ or $S_\lambda^{i,-}$ which is not $r$-semi-mixed with the ratio $\delta$. Suppose it is $S_\lambda^{1,+}$. Then there exists a spatial point $x_0$ such that
\begin{align}\label{eq:OppSparseR}
\mu\left(S_\lambda^{1,+}\cap B_{r}(x_0)\right) >  \varpi \delta r^3
\end{align}
where $\varpi$ denotes the volume of the 3D unit ball.

Let $\phi$ be a smooth, radially symmetric and radially decreasing function such that
\begin{align*}
\phi =\left\{\begin{array}{ccc} 1  &\textrm{ on } B_r(x_0) \\ 0 &\textrm{ on } \left(B_{(1+\eta)r}(x_0)\right)^c\end{array}\right. \qquad\textrm{and}\qquad |\nabla \phi|\lesssim \eta^{-1}r^{-1}
\end{align*}
Using an analogous argument to the one in the proof of Lemma~\ref{le:3DSparse}, together with the pre-duality of $GM_{p,\theta,w}$ in Lemma~\ref{le:GblMorreyPreDual}, we obtain
\begin{align}
& \left|\int_{\bR^3} (\nabla\times f)_1(y)\phi(y)dy\right| =\left|\int_{\bR^3} \left(f_3(y) (\nabla\phi)_2(y)- f_2(y)(\nabla\phi)_3(y)\right) dy\right| \notag
\\
&\qquad \lesssim \|f\|_{GM_{p,\theta,w}} \left( \left(\inf_{x\in\bR^3} \int_0^\infty \|\nabla\phi\|^{\theta'}_{L^{p'}(\bR^3\setminus B_t(x))}\ \frac{d}{dt} \left(\|w\|^{-\theta'}_{L^\theta(t,\infty)}\right)\right)^{1/\theta'}\! + \frac{\|\nabla\phi\|_{L^{p'}(\bR^3)}}{\|w\|_{L^\theta(0,\infty)}} \right)\ . \label{eq:MorDualIneq}
\end{align}
Before arguing for a contradiction, we need to compute the quantities $\|\nabla\phi\|_{L^{p'}(\bR^3)}$, $\|w\|_{L^\theta(0,\infty)}$ and the Lebesgue-Stieltjes integral $\displaystyle\int_0^\infty \|\nabla\phi\|^{\theta'}_{L^{p'}(\bR^3\setminus B_t(x))}\ \frac{d}{dt} \left(\|w\|^{-\theta'}_{L^\theta(t,\infty)}\right)$. From the assumption on $\phi$ and $w$, 
\begin{align*}
\|\nabla\phi\|_{L^{p'}(\bR^3\setminus B_t(x_0))} &= \left(\int_{B_{(1+\eta)r}(x_0)\setminus B_t(x_0)} |\nabla \phi|^{p'} dy\right)^{1/p'}
\\
&\lesssim \left\{\begin{array}{ccc} \left(\left(\eta^{-1} r^{-1}\right)^{p'}\cdot \left((1+\eta)^3 - 1\right) r^3\right)^{1/p'} & \textrm{if } t\le r \\ \left(\left(\eta^{-1} r^{-1}\right)^{p'}\cdot \left((1+\eta)^3 - (t/r)^3\right) r^3\right)^{1/p'} & \textrm{if } r<t< (1+\eta)r \\ 0 & \textrm{if } t\ge (1+\eta)r \end{array}\right.\ .
\end{align*}
If $\theta<\infty$, then
\begin{align*}
\|w\|_{L^\theta(t,\infty)} &= \left\{\begin{array}{ccc} \left(\displaystyle \int_\beta^1 s^{-\alpha\theta}ds\right)^{1/\theta}\lesssim \left(\beta^{1-\alpha\theta} -1\right)^{1/\theta} & \textrm{if } 0<t\le\beta \\ \left(\displaystyle \int_t^1 s^{-\alpha\theta}ds\right)^{1/\theta}\lesssim \left(t^{1-\alpha\theta} -1\right)^{1/\theta} & \textrm{if } \beta<t<1 \\ 0 & \textrm{if } t\ge1 \end{array}\right.\ .
\end{align*}
If $\theta=\infty$, then
\begin{align*}
\|w\|_{L^\theta(t,\infty)} &\lesssim \left\{\begin{array}{ccc} \beta^{-\alpha} & \textrm{if } 0<t\le\beta \\ t^{-\alpha} & \textrm{if } \beta<t<1 \\ 0 & \textrm{if } t\ge1 \end{array}\right.\ .
\end{align*}
In the rest of the proof, we only present the case $\theta<\infty$ since the deduction for $\theta=\infty$ is similar and easier. Now, for $\theta<\infty$,
\begin{align}\label{eq:PreDualEst2}
\frac{\|\nabla\phi\|_{L^{p'}(\bR^3)}}{\|w\|_{L^\theta(0,\infty)}} \lesssim \frac{\left((1+\eta)^3-\eta^3\right)^{1/p'} r^{\frac{3}{p'}-1}}{\left(\beta^{1-\alpha\theta} -1\right)^{1/\theta} \eta} \lesssim \eta^{-1} \left((1+\eta)^3-1\right)^{1/p'} \beta^{(\alpha\theta-1)/\theta} r^{\frac{3}{p'}-1}\ .
\end{align}
Note that the Lebesgue-Stieltjes integral in \eqref{eq:MorDualIneq} attains its minimum when $x=x_0$ (in which case $\bR^3\setminus B_t(x)$ covers most of the support of $\nabla \phi$), and referring to Remark~\ref{re:ConvLebStInt} we write
\begin{align*}
\frac{d}{dt} \left(\|w\|^{-\theta'}_{L^\theta(t,\infty)}\right) \lesssim \left\{\begin{array}{ccc} 0 & \textrm{if } 0<t\le\beta \\ \left(t^{1-\alpha\theta} -1\right)^{-\theta'/\theta-1}t^{-\alpha\theta} & \textrm{if } \beta<t<1 \\ 0 & \textrm{if } t\ge1 \end{array}\right.\ .
\end{align*}
Therefore, combining the above calculations gives the estimate
\begin{align*}
&\inf_{x\in\bR^3} \int_0^\infty \|\nabla\phi\|^{\theta'}_{L^{p'}(\bR^3\setminus B_t(x))}\ \frac{d}{dt} \left(\|w\|^{-\theta'}_{L^\theta(t,\infty)}\right)
\\
&\lesssim \left\{\begin{array}{ccc} \begin{array}{cc} \displaystyle \int_r^{(1+\eta)r}\left(\left((1+\eta)^3-\left(t/r\right)^3\right)^{\frac{1}{p'}} \eta^{-1}\right)^{\theta'} r^{(\frac{3}{p'}-1)\theta'}\left(t^{1-\alpha\theta} -1\right)^{-\frac{\theta'}{\theta}-1}t^{-\alpha\theta}dt \\ \displaystyle +\int_\beta^r\left(\left((1+\eta)^3-1\right)^{\frac{1}{p'}} \eta^{-1}\right)^{\theta'} r^{(\frac{3}{p'}-1)\theta'}\left(t^{1-\alpha\theta} -1\right)^{-\frac{\theta'}{\theta}-1}t^{-\alpha\theta}dt \end{array} & \textrm{if } \beta\le r \\ \displaystyle \int_\beta^{(1+\eta)r}\left(\left((1+\eta)^3-\left(t/r\right)^3\right)^{\frac{1}{p'}} \eta^{-1}\right)^{\theta'} r^{(\frac{3}{p'}-1)\theta'}\left(t^{1-\alpha\theta} -1\right)^{-\frac{\theta'}{\theta}-1}t^{-\alpha\theta}dt & \textrm{if } r<\beta<(1+\eta)r \\ 0 & \textrm{if } \beta\ge (1+\eta)r \end{array}\right.\ .
\end{align*}
Simplification via some asymptotic properties reduces the estimate to
\begin{align}
&\left(\inf_{x\in\bR^3} \int_0^\infty \|\nabla\phi\|^{\theta'}_{L^{p'}(\bR^3\setminus B_t(x))}\ \frac{d}{dt} \left(\|w\|^{-\theta'}_{L^\theta(t,\infty)}\right)\right)^{1/\theta'} \notag
\\
&\lesssim \left\{\begin{array}{ccc} \eta^{-1}\left((1+\eta)^3 - 1\right)^{\frac{1}{p'}} \left((1+\eta)^{\frac{(\alpha\theta -1)\theta'}{\theta}} - 1\right)^{\frac{1}{\theta'}} r^{\frac{3}{p'}-1} \left(r\vee \beta\right)^{\frac{\alpha\theta -1}{\theta}} & \textrm{if } \beta< (1+\eta)r  \\ 0 & \textrm{if } \beta\ge (1+\eta)r \end{array}\right.\ . \label{eq:PreDualEst1}
\end{align}
Now, following the idea in the proof of Lemma~\ref{le:3DSparse}, we write
\begin{align}\label{eq:DecompSparse}
\left|\int_{\bR^3} (\nabla\times f)_1(y)\phi(y)dy\right| &\ge \int_{\bR^3} (\nabla\times f)_1(y)\phi(y)dy \ge I-J-K
\end{align}
where
\begin{align*}
I &= \int_{S_\lambda^{1,+}\cap B_r(x_0)} (\nabla\times f)_1(y)\phi(y)dy
\\
J &= \left|\int_{B_r(x_0)\setminus S_\lambda^{1,+}} (\nabla\times f)_1(y)\phi(y)dy\right|
\\
K &= \left|\int_{B_{(1+\eta)r}(x_0)\setminus B_r(x_0)} (\nabla\times f)_1(y)\phi(y)dy\right|
\end{align*}
Similarly, with \eqref{eq:OppSparseR} we deduce
\begin{align*}
I &> \lambda\|\nabla\times f\|_\infty\  \mu\left(S_\lambda^{1,+}\cap B_{r}(x_0)\right)
\ge \varpi\lambda\delta r^3 \|\nabla\times f\|_\infty
\\
J &\le \|\nabla\times f\|_\infty \left(\mu\left(B_{r}(x_0)\right) - \mu\left(B_{r}(x_0)\cap S_\lambda^{1,+}\right) \right) \le \varpi(1-\delta) r^3\|\nabla\times f\|_\infty
\end{align*}
and
\begin{align*}
K &\le \|\nabla\times f\|_\infty \left(\mu\left(B_{(1+\eta)r}(x_0)\right) - \mu\left(B_{r}(x_0)\right) \right) \le \varpi((1+\eta)^3-1)r^3\|\nabla\times f\|_\infty\ .
\end{align*}
Combining the estimates for $I,J,K$, \eqref{eq:MorDualIneq}-\eqref{eq:DecompSparse}, we deduce
\begin{align*}
\varpi r^3 \left((1+\lambda)\delta-(1+\eta)^3\right) & \|\nabla\times f\|_\infty \lesssim \|f\|_{GM_{p,\theta,w}} \left(\eta^{-1} \left((1+\eta)^3-1\right)^{\frac{1}{p'}} \beta^{\frac{\alpha\theta-1}{\theta}} r^{\frac{3}{p'}-1} \right.
\\
& \left. +\ \eta^{-1} \left((1+\eta)^3 - 1\right)^{\frac{1}{p'}} \left((1+\eta)^{\frac{(\alpha\theta -1)\theta'}{\theta}} - 1\right)^{\frac{1}{\theta'}} r^{\frac{3}{p'}-1} \left(r\vee \beta\right)^{\frac{\alpha\theta -1}{\theta}}\right)
\\
&\qquad \lesssim \|f\|_{GM_{p,\theta,w}}\ \eta^{-1} \left((1+\eta)^3 - 1\right)^{\frac{1}{p'}} (1+\eta)^{\frac{\alpha\theta -1}{\theta}} r^{\frac{3}{p'}-1} \left(r\vee \beta\right)^{\frac{\alpha\theta -1}{\theta}}\ ,
\end{align*}
in other words, for some constant $c$,
\begin{align}
\|f\|_{GM_{p,\theta,w}} > \frac{c\ \varpi\left((1+\lambda)\delta-(1+\eta)^3\right) r^{3+\frac{p'-3}{p'}} (r\vee\beta)^{\frac{1-\alpha\theta}{\theta}} }{\eta^{-1} \left((1+\eta)^3 - 1\right)^{\frac{1}{p'}} (1+\eta)^{\frac{\alpha\theta -1}{\theta}}}\ \|\nabla\times f\|_\infty\ .
\end{align}
Since $\delta>\frac{1}{1+\lambda}$, if we set $(1+\eta)^3 = \frac{\delta(1+\lambda)+1}{2}$, then
\begin{align*}
\|f\|_{GM_{p,\theta,w}} > \varepsilon(\lambda, \delta)\ r^{3+\frac{p'-3}{p'}} (r\vee\beta)^{\frac{1-\alpha\theta}{\theta}} \|\nabla\times f\|_\infty
\end{align*}
where $\varepsilon(\lambda, \delta)=c\ \varpi\left(\frac{\delta(1+\lambda)-1}{2}\right)^{1-\frac{1}{p'}} \left(\frac{\delta(1+\lambda)+1}{2}\right)^{\frac{1-\alpha\theta}{3\theta}} \left(\sqrt[3]{\frac{\delta(1+\lambda)+1}{2}} -1\right)$, which produces a contradiction.

\end{proof}

\section{The Main Results}\label{sec:MainThm}

The main results are the following.
\begin{theorem}\label{th:MainThm}
Let $u$ be a unique regular solution to the 3D NSE evolving from $u_0\in L^\infty$ and $T^*$ be the first possible blow-up time. Assume that the initial value of the vorticity $\omega_0\in L^2\cap L^\infty$ and let $t$ be an escape time. There exists $\epsilon_0$ such that if the solution $u$ and the vorticity $\omega$ satisfy
\begin{align}\label{eq:RestrMorreyCond}
\inf_{s\in\left[t+\frac{1}{4c_0\|\omega(t)\|_\infty},\ t+\frac{1}{c_0\|\omega(t)\|_\infty}\right]}\quad \sup_{x\in\bR^3,\ \|\omega(s)\|_\infty^{-\frac{1}{2}}\lesssim r\le 1} \quad \frac{1}{r} \int_{B_r(x)} |u(s,y)|^2dy\ \le\ \epsilon_0\ ,
\end{align}
then, there exists $\gamma>0$ such that $u\in L^\infty((0,T^*+\gamma); L^\infty)$, i.e. $T^*$ is not a blow-up time.
\end{theorem}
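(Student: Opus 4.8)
The plan is to verify the hypotheses of Theorem~\ref{th:SparsityRegVel} (vorticity version) at the escape time $t$, using Lemma~\ref{le:3DSparse} applied to $f=u$ (so that $\nabla\times f=\omega$) to convert the dynamically restricted Morrey smallness \eqref{eq:RestrMorreyCond} into volumetric sparseness of the super-level sets of $\omega$. Since a finite-time blow-up at $T^*$ would force $\|\omega(t)\|_\infty\to\infty$ as $t\uparrow T^*$ (otherwise there is nothing to prove), I may assume $\|\omega(t)\|_\infty$, and hence $\|\omega(s)\|_\infty$ on the escape window, is as large as needed. From the infimum in \eqref{eq:RestrMorreyCond} I first extract a single time $s=s(t)$ in $[t+\frac{1}{4c_0\|\omega(t)\|_\infty},t+\frac{1}{c_0\|\omega(t)\|_\infty}]$ for which $\sup_{x,\ \|\omega(s)\|_\infty^{-1/2}\lesssim r\le1}\frac1r\int_{B_r(x)}|u(s,y)|^2\,dy\le\epsilon_0$; this $s$ plays the role of $s(t)$ in Theorem~\ref{th:SparsityRegVel}.

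Second, I would rewrite the Morrey bound as $\sup_{x}\|u(s)\|_{L^2(B_r(x))}\le\sqrt{\epsilon_0}\,r^{1/2}$ for every admissible $r$, and choose the scale $r=C\|\omega(s)\|_\infty^{-1/2}$ for a fixed constant $C$. With this choice $r^2\|\omega(s)\|_\infty=C^2$, so the hypothesis \eqref{eq:ZalphaCond} of Lemma~\ref{le:3DSparse} reads $\sqrt{\epsilon_0}\le c^*(\lambda,\delta)\,C^2$, which holds once $\epsilon_0$ is small relative to $(c^*C^2)^2$. Here $C$ must be pinned down by three competing scale constraints: it must exceed the implicit constant in the lower restriction $\|\omega(s)\|_\infty^{-1/2}\lesssim r$ so that $r$ is admissible; it must satisfy $\kappa C\le\frac{1}{2c(M)}$ so that the resulting sparseness scale $\kappa r$ obeys the upper bound $\rho\le\frac{1}{2c(M)\|\omega(s)\|_\infty^{1/2}}$ demanded by Theorem~\ref{th:SparsityRegVel}; and $r\le1$, which is where the largeness of $\|\omega(s)\|_\infty$ is used. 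These are mutually compatible precisely because the lower restriction in \eqref{eq:RestrMorreyCond} carries a flexible constant, and I would fix $C$ in the admissible window and only then choose $\epsilon_0$.

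Third comes the bookkeeping between the two notions of sparseness. Theorem~\ref{th:SparsityRegVel} requires $1$D $\delta$-sparseness for a pair $(\lambda,\delta)$ obeying the transcendental relations $\lambda h+(1-h)=2\lambda$, $h=\frac2\pi\arcsin\frac{1-\delta^2}{1+\delta^2}$, whereas Lemma~\ref{le:3DSparse} yields $3$D semi-mixing. Using the implication ``$3$D $\delta_{3D}$-sparse $\Rightarrow$ $1$D $\delta_{3D}^{1/3}$-sparse'' recorded after the definition of $3$D sparseness, I would apply Lemma~\ref{le:3DSparse} with the $3$D ratio $\delta_{3D}=\delta^3$, producing $1$D $\delta$-sparseness at scale $\kappa r$ for all six super-level sets of $\omega$ (in particular around the relevant base point and index). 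For this to be legitimate one needs $\delta^3\in(\frac{1}{1+\lambda},1)$, a strengthening of the bare requirement $\delta\in(\frac{1}{1+\lambda},1)$; I would check that taking $\delta$ close to $1$ (so that $\lambda\approx\frac12$ and $\frac1{1+\lambda}\approx\frac23<\delta^3$) leaves a nonempty set of admissible $(\lambda,\delta)$. Feeding this sparseness into Theorem~\ref{th:SparsityRegVel} with $\rho=\kappa r$ then yields $u\in L^\infty((0,T^*+\gamma);L^\infty)$, i.e. $T^*$ is not a blow-up time.

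The main obstacle is the simultaneous calibration in the second and third steps: the single free scale $r$ is squeezed to $r\asymp\|\omega(s)\|_\infty^{-1/2}$ from both sides (the Morrey lower restriction forces $r\gtrsim\|\omega(s)\|_\infty^{-1/2}$, while the escape-time sparseness scale forces $\kappa r\lesssim\|\omega(s)\|_\infty^{-1/2}$), and at that critical scale the Lemma's hypothesis collapses to the clean threshold $\sqrt{\epsilon_0}\le c^*C^2$. One must therefore verify that the constants $c(M)$, $\kappa(\lambda,\delta^3)$ and $c^*(\lambda,\delta^3)$ can be matched with a single pair $(\lambda,\delta)$ and a single universal $\epsilon_0$ independent of the escape time. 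This is exactly the point at which the critical exponent $\frac12$ on the lower restriction (equivalently $\alpha=\frac12$, matching $Z_{1/2}$) is forced: any other power would break the cancellation $r^2\|\omega(s)\|_\infty=\mathrm{const}$ that makes $\epsilon_0$ choosable once and for all.
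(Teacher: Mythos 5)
Your proposal is correct and follows essentially the same route as the paper's proof: extract a single time $s$ from the infimum, rewrite the restricted Morrey bound as $\sup_x\|u(s)\|_{L^2(B_r(x))}\lesssim\epsilon_0^{1/2}r^{5/2}\|\omega(s)\|_\infty$ at the critical scale $r\asymp\|\omega(s)\|_\infty^{-1/2}$, invoke Lemma~\ref{le:3DSparse} with $f=u(s)$ to get 3D semi-mixedness of the vorticity super-level sets, pass to 1D sparseness via the cube-root implication, and conclude with Theorem~\ref{th:SparsityRegVel}. In fact your treatment of the two calibration points the paper glosses over --- matching the constant $C$ against $c(M)$ and $\kappa$, and verifying that a pair $(\lambda,\delta)$ with $\delta^3>\frac{1}{1+\lambda}$ compatible with the transcendental relations exists (taking $\delta$ near $1$, $\lambda$ near $\frac12$) --- is more explicit than the paper's own write-up.
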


\begin{proof}
By the assumption there exists $s\in\left[t+\frac{1}{4c_0\|\omega(t)\|_\infty},\ t+\frac{1}{c_0\|\omega(t)\|_\infty} \right]$ such that
\begin{align*}
\sup_{x\in\bR^3,\ \|\omega(s)\|_\infty^{-\frac{1}{2}}\lesssim r\le 1} \ r^{-1} \|u(s)\|^2_{L^2(B_r(x))}\ \le\ \epsilon_0\ .
\end{align*}
This can be reformulated as
\begin{align*}
\sup_{x\in\bR^3}r^{-1} \|u(s)\|^2_{L^2(B_r(x))}\ \le\ \epsilon_0 \qquad\textrm{for all }\ \|\omega(s)\|_\infty^{-\frac{1}{2}}\lesssim r\le 1\ ,
\end{align*}
in other words,
\begin{align*}
r^{-\frac{1}{2}}\sup_{x\in\bR^3}\|u(s)\|_{L^2(B_r(x))} \le \epsilon_0^{\frac{1}{2}} \qquad\textrm{for all $r$ such that }\ 1\lesssim \|\omega(s)\|_\infty^{\frac{1}{2}}\ r\le \|\omega(s)\|_\infty^{\frac{1}{2}}\ ,
\end{align*}
which implies that, for all $\|\omega(s)\|_\infty^{-\frac{1}{2}}\lesssim r\le 1$,
\begin{align*}
\sup_{x\in\bR^3}\|u(s)\|_{L^2(B_r(x))} \le \epsilon_0^{\frac{1}{2}}r^{\frac{1}{2}}\lesssim \epsilon_0^{\frac{1}{2}}r^{\frac{1}{2}} \left(\|\omega(s)\|_\infty^{\frac{1}{2}} r\right)^2 \lesssim \epsilon_0^{\frac{1}{2}}r^{\frac{5}{2}}\|\omega(s)\|_\infty\ .
\end{align*}
In particular, for some $r\le\frac{1}{c_0\|\omega(s)\|_\infty^{\frac{1}{2}}}$ we have
\begin{align}
\sup_{x\in\bR^3}\|u(s)\|_{L^2(B_r(x))} \lesssim \epsilon_0^{\frac{1}{2}}r^{\frac{5}{2}}\|\omega(s)\|_\infty\ .
\end{align}
Now applying Lemma~\ref{le:3DSparse} with $f=u(s)$ we can conclude that for any pair $(\lambda, \delta)$ with $\lambda\in(0,1)$ and $\delta\in(\frac{1}{1+\lambda},1)$, there exists sufficiently small $\epsilon_0$ such that all the super-level sets
\begin{align*}
\Omega_\lambda^{i,\pm}=\left\{x\in\bR^3\ |\ \omega_i^\pm(x,s)>\lambda \|\omega(s)\|_\infty\right\}\ , \qquad i=1,2,3
\end{align*}
are $(\kappa r)$-semi-mixed with ratio $\delta$ for some $\kappa<1$. This implies each $\Omega_\lambda^{i,\pm}$ is 1D $(\kappa r)^{\frac{1}{3}}$-sparse around any spatial point $x_0$ at scale $r\le\frac{1}{c_0\|\omega(s)\|_\infty^{\frac{1}{2}}}$, which fits the assumption in Theorem~\ref{th:SparsityRegVel}; therefore $T^*$ is not a blow-up time.
\end{proof}

\bigskip

Using Theorem~\ref{th:SparsityRegVel} together with Lemma~\ref{le:3DSparseGblMor}, we provide a different proof of Theorem~\ref{th:MainThm} which not only extends the result into the general Morrey-type spaces introduced in Section~\ref{sec:MorreyPreDual} but also embeds our regularity result into a more general formalism, the $Z_\alpha$ classes (cf. Section~\ref{sec:SparseRegAssm}).

\begin{theorem}\label{th:MainThmVor}
Let $u$ be a unique regular solution to the 3D NSE evolving from $u_0\in L^\infty$ and $T^*$ be the first possible blow-up time. Assume that the initial value of the vorticity $\omega_0\in L^2\cap L^\infty$ and let $t$ be an escape time. There exists $\epsilon_0$ such that if there is $s\in\left[t+\frac{1}{4c_0\|\omega(t)\|_\infty},\ t+\frac{1}{c_0\|\omega(t)\|_\infty}\right]$ with
\begin{align}\label{eq:RestrMorreyCondVar}
\|u(s)\|_{GM_{p,\theta,w}} \ \le\left\{\begin{array}{ccc} \epsilon_0 \left(\|\omega(s)\|_\infty\right)^{(\alpha\wedge\beta) \frac{\nu\theta-1}{\theta}-\alpha\left(3+\frac{p'-3}{p'}\right)+1} & \textrm{if } 1<\theta<\infty \\ \epsilon_0 \left(\|\omega(s)\|_\infty\right)^{(\alpha\wedge\beta) \nu-\alpha\left(3+\frac{p'-3}{p'}\right)+1} & \textrm{if } \theta=\infty \end{array}\right.
\end{align}
where $w(r)=r^{-\nu}\1_{\left[c\|\omega(s)\|_\infty^{-\beta} ,\ 1\right]}$, then $\omega(s)\in Z_\alpha(\lambda,\delta,\varepsilon^*)$ for any pair $(\lambda, \delta)$ such that $\lambda\in (0,1)$ and $\delta\in(\frac{1}{1+\lambda},1)$ with some $\varepsilon^*$ depending on $(\lambda,\delta)$; in particular, this implies that when $\alpha=\frac{1}{2}$, $T^*$ is not a blow-up time.
If the above parameters satisfy
\begin{center}
$(\alpha\wedge\beta) \frac{\nu\theta-1}{\theta}-\alpha\left(3+\frac{p'-3}{p'}\right)+1=0\ $ if $\theta<\infty$;
\\
$(\alpha\wedge\beta) \nu-\alpha\left(3+\frac{p'-3}{p'}\right)+1=0\ $ if $\theta=\infty$,
\end{center}
the condition \eqref{eq:RestrMorreyCondVar} simply writes $\|u(s)\|_{GM_{p,\theta,w}}\le\epsilon_0$. Note that Theorem~\ref{th:MainThm} is the special case when $\nu=\beta=\frac{1}{2}$, $p=2$ and $\theta=\infty$.
\end{theorem}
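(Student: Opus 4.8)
The plan is to apply the lower-restricted local Morrey sparseness criterion, Lemma~\ref{le:3DSparseGblMor}, to $f=u(s)$, whose curl is the vorticity $\omega(s)$, and then to read off membership in $Z_\alpha$ by matching scales. First I would identify the weight: the $w$ appearing in \eqref{eq:RestrMorreyCondVar} is exactly the weight of Lemma~\ref{le:3DSparseGblMor} with the lemma's exponent played by $\nu$ (so the standing hypothesis $\nu\theta>1$ is assumed) and the lemma's lower cut-off $\rho$ equal to $c\|\omega(s)\|_\infty^{-\beta}$. The one free parameter left in the lemma is the scale $r$ at which semi-mixedness is tested; I would choose $r=r(s)$ comparable to $\|\omega(s)\|_\infty^{-\alpha}$, with the implicit constant lying in the admissible window dictated by the definition of $Z_\alpha$ (and, when $\alpha=\tfrac12$, by the scale restriction in Theorem~\ref{th:SparsityRegVel}). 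Near blow-up $\|\omega(s)\|_\infty$ is large, so $r\le1$ and $\rho\ll1$, and the two candidate scales compare as $r\vee\rho\sim\|\omega(s)\|_\infty^{-(\alpha\wedge\beta)}$, the maximum being governed by the smaller of the exponents $\alpha,\beta$; this is the origin of the $\alpha\wedge\beta$ in \eqref{eq:RestrMorreyCondVar}.

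The second step is the exponent bookkeeping. Substituting $r\sim\|\omega(s)\|_\infty^{-\alpha}$ and $r\vee\rho\sim\|\omega(s)\|_\infty^{-(\alpha\wedge\beta)}$ into the threshold \eqref{eq:EpsMorreyCond} of Lemma~\ref{le:3DSparseGblMor} (with $\|\nabla\times f\|_\infty=\|\omega(s)\|_\infty$), the right-hand side becomes $\varepsilon(\lambda,\delta)\,\|\omega(s)\|_\infty^{E}$ for $\theta<\infty$ with
\[
E=(\alpha\wedge\beta)\tfrac{\nu\theta-1}{\theta}-\alpha\bigl(3+\tfrac{p'-3}{p'}\bigr)+1 ,
\]
and the analogous expression with $\tfrac{\nu\theta-1}{\theta}$ replaced by $\nu$ when $\theta=\infty$. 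This is precisely the exponent appearing in \eqref{eq:RestrMorreyCondVar}, so taking $\epsilon_0\le\varepsilon(\lambda,\delta)$ turns the hypothesis \eqref{eq:RestrMorreyCondVar} into the smallness condition \eqref{eq:EpsMorreyCond} verbatim. Lemma~\ref{le:3DSparseGblMor} then yields that each super-level set $S_\lambda^{i,\pm}=\{\omega_i^\pm(\cdot,s)>\lambda\|\omega(s)\|_\infty\}$ is $r$-semi-mixed with ratio $\delta$, i.e. $3$D $\delta$-sparse at the scale $r\sim\|\omega(s)\|_\infty^{-\alpha}$ around every point.

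With semi-mixedness in hand, $Z_\alpha$ membership is immediate: for an arbitrary $x_0$, choosing the index $(j,\pm)$ with $|\omega(x_0,s)|=\omega_j^\pm(x_0,s)$, the set $S_j^\pm$ is $\delta$-sparse around $x_0$ at scale $\tfrac{1}{c}\|\omega(s)\|_\infty^{-\alpha}$, and absorbing the implicit constant into the admissible range $(\tfrac{1}{c_0},c_0)$ produces the $\varepsilon^*$ and the claim $\omega(s)\in Z_\alpha(\lambda,\delta,\varepsilon^*)$. For the critical case $\alpha=\tfrac12$, I would convert the $3$D semi-mixedness into $1$D $\delta^{1/3}$-sparseness at scale $r\sim\|\omega(s)\|_\infty^{-1/2}$ via the implication recorded after the $3$D-sparse definition; since $s$ lies in the interval prescribed by Theorem~\ref{th:SparsityRegVel}, $t$ is an escape time, and the scale respects $r\le\tfrac{1}{2c(M)\|\omega(s)\|_\infty^{1/2}}$, the hypotheses of Theorem~\ref{th:SparsityRegVel} are met and $T^*$ cannot be a blow-up time. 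The balance-condition statement and the identification of Theorem~\ref{th:MainThm} as the case $\nu=\beta=\alpha=\tfrac12$, $p=2$, $\theta=\infty$ then follow by substituting into $E$ and checking $E=0$ (indeed $p'=2$ gives $3+\tfrac{p'-3}{p'}=\tfrac52$, whence $E=\tfrac14-\tfrac54+1=0$).

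The main obstacle I anticipate is the matching of admissible scale windows rather than the algebra: one must verify that the single choice $r\sim\|\omega(s)\|_\infty^{-\alpha}$ simultaneously is legitimate in Lemma~\ref{le:3DSparseGblMor} (i.e. $r\in(0,1]$, which needs $\|\omega(s)\|_\infty\ge1$ near blow-up) whether $r$ falls above or below the cut-off $\rho$ — this is exactly what the $r\vee\rho$ formulation accommodates — falls in the constant range permitted by $Z_\alpha$, and, for $\alpha=\tfrac12$, respects both the scale bound and the temporal window of Theorem~\ref{th:SparsityRegVel}. A secondary subtlety is reconciling the ratios: the lemma outputs the volumetric ($3$D) ratio $\delta$, whereas the regularity criterion consumes a $1$D ratio, so one must track the $\delta\mapsto\delta^{1/3}$ passage and confirm that the pair $(\lambda,\delta)$ can still be chosen to satisfy the trigonometric relations required in Theorem~\ref{th:SparsityRegVel}.
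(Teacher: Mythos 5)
Your proposal is correct and takes essentially the same approach as the paper: the paper's own proof is a one-line citation of the first part of Lemma~\ref{le:3DSparseGblMor} (plus Theorem~\ref{th:SparsityRegVel} when $\alpha=\tfrac12$), and your argument---applying that lemma with its weight exponent played by $\nu$, its cut-off $\rho=c\|\omega(s)\|_\infty^{-\beta}$, and test scale $r\sim\|\omega(s)\|_\infty^{-\alpha}$, so that $r\vee\rho\sim\|\omega(s)\|_\infty^{-(\alpha\wedge\beta)}$---is precisely the bookkeeping that makes this ``immediate consequence'' explicit. Your exponent computation reproducing \eqref{eq:RestrMorreyCondVar} and the special-case check ($\nu=\beta=\tfrac12$, $p=2$, $\theta=\infty$ giving exponent $0$) are both accurate.
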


\begin{proof}
This is an immediate consequence of the first part of Lemma~\ref{le:3DSparseGblMor}, and when $\alpha=\frac{1}{2}$ the continuation of smooth solution at $T^*$ follows from Theorem~\ref{th:SparsityRegVel}.
\end{proof}

\begin{theorem}\label{th:MainThmVel}
Let $u$ be a unique regular solution to the 3D NSE evolving from $u_0\in L^\infty$ and $T^*$ be the first possible blow-up time. Assume that the initial value of the vorticity $\omega_0\in L^2\cap L^\infty$ and let $t$ be an escape time. There exists $\epsilon_0$ such that if there is $s\in\left[t+\frac{1}{4c_0^2\|u(t)\|_\infty^2},\ t+\frac{1}{c_0^2\|u(t)\|_\infty^2}\right]$ $\left(\textrm{resp. }s\in\left[t+\frac{1}{4c_0^2\|\omega(t)\|_\infty},\ t+\frac{1}{c_0^2\|\omega(t)\|_\infty}\right]\ \right)$ with
\begin{align}\label{eq:RestrMorreyCondVarVel}
&\qquad\quad \|u(s)\|_{GM_{p,\theta,w}} \ \le\left\{\begin{array}{ccc} \epsilon_0 \left(\|u(s)\|_\infty\right)^{(\alpha\wedge\beta) \frac{\nu\theta-1}{\theta}-\alpha\left(3-\frac{3}{p'}\right)+1} & \textrm{if } 1<\theta<\infty \\ \epsilon_0 \left(\|u(s)\|_\infty\right)^{(\alpha\wedge\beta) \nu-\alpha\left(3-\frac{3}{p'}\right)+1} & \textrm{if } \theta=\infty \end{array}\right.
\\
&\left(\textrm{resp. }\|\omega(s)\|_{GM_{p,\theta,w}} \ \le\left\{\begin{array}{ccc} \epsilon_0 \left(\|\omega(s)\|_\infty\right)^{(\alpha\wedge\beta) \frac{\nu\theta-1}{\theta}-\alpha\left(3-\frac{3}{p'}\right)+1} & \textrm{if } 1<\theta<\infty \\ \epsilon_0 \left(\|\omega(s)\|_\infty\right)^{(\alpha\wedge\beta) \nu-\alpha\left(3-\frac{3}{p'}\right)+1} & \textrm{if } \theta=\infty \end{array}\right.\ \right)  \notag
\end{align}
where $w(r)=r^{-\nu}\1_{\left[c\|u(s)\|_\infty^{-\beta} ,\ 1\right]}$ $\left(\textrm{resp. }w(r)=r^{-\nu}\1_{\left[c\|\omega(s)\|_\infty^{-\beta} ,\ 1\right]} \right)$,
then $u(s)\in Z_\alpha(\lambda,\delta,\varepsilon^*)$ (resp. $\omega(s)\in Z_\alpha(\lambda,\delta,\varepsilon^*)$) for any pair $(\lambda, \delta)$ such that $\lambda\in (0,1)$ and $\delta\in(\frac{1}{1+\lambda},1)$ with some $\varepsilon^*$ depending on $(\lambda,\delta)$, which implies that when $\alpha=1$ (resp. $\alpha=\frac{1}{2}$), $T^*$ is not a blow-up time.
In particular, if the above parameters satisfy
\begin{center}
$(\alpha\wedge\beta) \frac{\nu\theta-1}{\theta}-\alpha\left(3-\frac{3}{p'}\right)+1=0\ $ if $\theta<\infty$;
\\
$(\alpha\wedge\beta) \nu-\alpha\left(3-\frac{3}{p'}\right)+1=0\ $ if $\theta=\infty$,
\end{center}
the condition \eqref{eq:RestrMorreyCondVarVel} simply writes $\|u(s)\|_{GM_{p,\theta,w}}\le\epsilon_0$ $\left(\textrm{resp. }\|\omega(s)\|_{GM_{p,\theta,w}}\le\epsilon_0\ \right)$.
\end{theorem}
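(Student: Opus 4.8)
The plan is to follow the proof of Theorem~\ref{th:MainThmVor} almost verbatim, but to invoke the second (the ``resp.'') half of Lemma~\ref{le:3DSparseGblMor}, which controls the super-level sets of $f$ itself rather than those of $\nabla\times f$ and which carries the factor $r^{3-\frac{3}{p'}}$ in place of $r^{3+\frac{p'-3}{p'}}$. Concretely, I would apply that half of the lemma directly to $f=u(s)$ in the velocity statement and to $f=\omega(s)$ in the vorticity statement, taking the weight $w(r)=r^{-\nu}\1_{[\rho,1]}$ with lower cut $\rho=c\|f\|_\infty^{-\beta}$ (so the lemma's weight exponent is the theorem's $\nu$).

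First I would fix the evaluation scale at $r\asymp\|f\|_\infty^{-\alpha}$; this is admissible ($r\in(0,1]$ with $\rho\ll1$) because $\|f\|_\infty$ is large on the escape-time interval while $\alpha,\beta>0$. The decisive computation is the bookkeeping of powers of $\|f\|_\infty$: since $\rho=c\|f\|_\infty^{-\beta}$ and $r\asymp\|f\|_\infty^{-\alpha}$, one has $r\vee\rho\asymp\|f\|_\infty^{-(\alpha\wedge\beta)}$, so substituting into the right-hand side of the ``resp.'' bound of Lemma~\ref{le:3DSparseGblMor} gives, for $\theta<\infty$,
\begin{align*}
\varepsilon(\lambda,\delta)\,(r\vee\rho)^{\frac{1-\nu\theta}{\theta}}\,r^{3-\frac{3}{p'}}\|f\|_\infty
\asymp \varepsilon(\lambda,\delta)\,\|f\|_\infty^{(\alpha\wedge\beta)\frac{\nu\theta-1}{\theta}-\alpha\left(3-\frac{3}{p'}\right)+1},
\end{align*}
and the analogous identity with exponent $(\alpha\wedge\beta)\nu-\alpha(3-\frac{3}{p'})+1$ for $\theta=\infty$. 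This is exactly the right-hand side of \eqref{eq:RestrMorreyCondVarVel}, so choosing $\epsilon_0\le\varepsilon(\lambda,\delta)$ (folding the implicit $\asymp$-constant into $\varepsilon^*$) makes \eqref{eq:RestrMorreyCondVarVel} precisely the hypothesis \eqref{eq:EpsMorreyCond} of the lemma at scale $r$.

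With the hypothesis verified, Lemma~\ref{le:3DSparseGblMor} yields that each of the six super-level sets of $u(s)$ (resp. $\omega(s)$) is $r$-semi-mixed with ratio $\delta$ at the scale $r\asymp\|f\|_\infty^{-\alpha}$, i.e. 3D $\delta$-sparse around every $x_0$ at that scale. Since semi-mixing delivers this for all points and all six components, it in particular covers the distinguished index $(j,\pm)$ with $f_j^\pm(x_0)=|f(x_0)|$, which is exactly membership in $Z_\alpha(\lambda,\delta,\varepsilon^*)$. For the blow-up exclusion I would then specialize to the critical exponent, $\alpha=1$ in the velocity case and $\alpha=\frac{1}{2}$ in the vorticity case, so that the sparseness scale $\|f\|_\infty^{-\alpha}$ matches the scale admitted in Theorem~\ref{th:SparsityRegVel} ($\rho\le\frac{1}{2c_0^2\|u(s)\|_\infty}$, resp. $\rho\le\frac{1}{2c_0\|\omega(s)\|_\infty^{1/2}}$); passing from 3D $\delta$-sparseness to 1D $\delta^{1/3}$-sparseness and choosing $(\lambda,\delta)$ compatible with the constraint stated there, Theorem~\ref{th:SparsityRegVel} applies and rules out $T^*$ as a blow-up time.

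The main obstacle is not conceptual but the exponent accounting: one must check that the power of $\|f\|_\infty$ emerging from $r\asymp\|f\|_\infty^{-\alpha}$ and $r\vee\rho\asymp\|f\|_\infty^{-(\alpha\wedge\beta)}$ reproduces \eqref{eq:RestrMorreyCondVarVel} on the nose, and that at the critical $\alpha$ the scale $r$ actually lies below both the threshold demanded by Theorem~\ref{th:SparsityRegVel} and $1$, so that no admissibility window is lost when the implicit constants are absorbed into $\varepsilon^*$ and $c_0$.
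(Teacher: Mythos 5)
Your proposal is correct and follows exactly the paper's route: the paper's own proof is a one-line appeal to the second (``resp.'') part of Lemma~\ref{le:3DSparseGblMor} applied to $f=u(s)$ (resp. $f=\omega(s)$), followed by Theorem~\ref{th:SparsityRegVel} at the critical exponents $\alpha=1$ (velocity) and $\alpha=\tfrac{1}{2}$ (vorticity). Your exponent bookkeeping with $r\asymp\|f\|_\infty^{-\alpha}$ and $r\vee\rho\asymp\|f\|_\infty^{-(\alpha\wedge\beta)}$ simply makes explicit the computation the paper leaves implicit in calling the result ``immediate.''
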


\begin{proof}
This is an immediate consequence of the second part of Lemma~\ref{le:3DSparseGblMor}, and when $\alpha=1$ (resp. $\alpha=\frac{1}{2}$) the continuation of smooth solution at $T^*$ is a corollary of Theorem~\ref{th:SparsityRegVel}.
\end{proof}

Analogously, one can formulate regularity criteria such as
\begin{align*}
\|u(s)\|_{GM_{p,\theta,w}}\le \epsilon_0 \left(\|u(s)\|_\infty\right)^{\gamma_1}\left(\|\omega(s)\|_\infty\right)^{\gamma_2} \qquad\textrm{with}\quad w(r)=r^{-\nu}\1_{\left[c\|u(s)\|_\infty^{-\beta_1}\|\omega(s)\|_\infty^{-\beta_2} ,\ 1\right]}
\end{align*}
or
\begin{align*}
\|\omega(s)\|_{GM_{p,\theta,w}}\le \epsilon_0 \left(\|u(s)\|_\infty\right)^{\gamma_1}\left(\|\omega(s)\|_\infty\right)^{\gamma_2} \qquad\textrm{with}\quad w(r)=r^{-\nu}\1_{\left[c\|u(s)\|_\infty^{-\beta_1}\|\omega(s)\|_\infty^{-\beta_2} ,\ 1\right]}
\end{align*}
The proofs are replicable.

%%\bibliographystyle{abbrvnat}
%\bibliographystyle{plainnat}
%%%\bibliographystyle{./bibfiles/unsrtdin}
%\bibliography{bib_pde-08-28-2018}

\def\cprime{$'$}

\end{document}